\newcolumntype{C}{>{\centering\arraybackslash}X}
\newcolumntype{D}{>{\centering\arraybackslash}X}
\newtheorem{theorem}{Theorem}
\newtheorem{lemma}[theorem]{Lemma}
\newtheorem{proposition}[theorem]{Proposition}
\newtheorem{corollary}[theorem]{Corollary}
\newtheorem{definition}[theorem]{Definition}
\newtheorem{claim}{Claim}
\newtheorem*{claim*}{Claim}
\theoremstyle{remark}
\newcommand{\arb}{\ensuremath{\operatorname{a}}}
\newcommand{\iarb}{\ensuremath{\operatorname{ia}}}
\newcommand{\wiarb}{\ensuremath{\operatorname{wia}}}
\newcommand{\sarb}{\ensuremath{\operatorname{sa}}}
\newcommand{\isarb}{\ensuremath{\operatorname{isa}}}
\newcommand{\wisarb}{\ensuremath{\operatorname{wisa}}}
\newcommand{\chiacyc}{\ensuremath{\operatorname{\chi_{\rm acyc}}}}
\newcommand{\sd}{\ensuremath{\operatorname{sd}}}
\newcommand{\SD}{\ensuremath{\operatorname{SD}}}
\newcommand{\half}{\ensuremath{\sfrac{1}{2}}}
\newcommand{\cA}{\ensuremath{\mathcal{A}}}
\newcommand{\cD}{\ensuremath{\mathcal{D}}}
\newcommand{\cF}{\ensuremath{\mathcal{F}}}
\newcommand{\cP}{\ensuremath{\mathcal{P}}}
\newcommand{\cS}{\ensuremath{\mathcal{S}}}
\newcommand{\cO}{\ensuremath{\mathcal{O}}}
\newcommand{\cT}{\ensuremath{\mathcal{T}}}
\newcommand{\N}{\ensuremath{\mathbb{N}}}
\begin{document}

\title{Induced and Weak Induced Arboricities}

\author{Maria Axenovich, Philip D\"orr, Jonathan Rollin, Torsten Ueckerdt}

\maketitle

\begin{abstract}
 We define the induced arboricity of a graph $G$, denoted by $\iarb(G)$, as the smallest $k$ such that the edges of $G$ can be covered with $k$ induced forests in $G$.
 This notion generalizes the classical notions of the arboricity and strong chromatic index.
 
 For a class $\cF$ of graphs and a graph parameter $p$, let $p(\cF) = \sup\{p(G) \mid G\in \cF\}$. 
 We show that $\iarb(\cF)$ is bounded from above by an absolute constant depending only on $\cF$, that is $\iarb(\cF)\neq\infty$ if and only if $\chi(\cF \nabla \half) \neq\infty$, where $\cF \nabla \half$ is the class of $\half$-shallow minors of graphs from $\cF$ and $\chi$ is the chromatic number.
 
 Further, we give bounds on $\iarb(\cF)$ when $\cF$ is the class of planar graphs, the class of $d$-degenerate graphs, or the class of graphs having tree-width at most~$d$.
 Specifically, we show that if $\cF$ is the class of planar graphs, then $8 \leq \iarb(\cF) \leq 10$.
 
 In addition, we establish similar results for so-called weak induced arboricities and star arboricities of classes of graphs. 
\end{abstract}

\section{Introduction}
 For a graph $G$, the \emph{arboricity} $\arb(G)$ 
 is  the smallest number of forests
 covering all the edges of $G$. As Nash-Williams proved $50$ years ago, the arboricity is governed precisely by the largest density among the subgraphs of $G$. Let 
\[
 {\rm m}(G) = \max\left\{ \left\lceil \frac{|E(H)|}{|V(H)|-1} \right \rceil \colon H \subseteq G, |V(H)| \geq 2\right\}.
\]

\begin{theorem}[Nash-Williams~\cite{Nas-64}]\label{thm:Nash-Williams}
 For every graph $G$ with $|V(G)|\geq 2$ we have  $\arb(G) = {\rm m}(G)$.
\end{theorem}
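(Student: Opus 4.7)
\noindent\emph{Proof proposal.} The plan is to prove the two inequalities $\arb(G) \geq {\rm m}(G)$ and $\arb(G) \leq {\rm m}(G)$ separately.

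The first inequality is a straightforward counting argument. If the edges of $G$ are covered by $k$ forests $F_1, \dots, F_k$, then for any subgraph $H \subseteq G$ with $|V(H)| \geq 2$, each $F_i \cap E(H)$ is a forest on at most $|V(H)|$ vertices and hence contains at most $|V(H)|-1$ edges. Summing over $i$ gives $|E(H)| \leq k(|V(H)|-1)$, so $k \geq \lceil |E(H)|/(|V(H)|-1) \rceil$. Taking the maximum over such $H$ yields $\arb(G) \geq {\rm m}(G)$.

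For the reverse inequality I would set $k = {\rm m}(G)$ and invoke the matroid union theorem of Edmonds and Nash-Williams applied to the graphic matroid $M$ on ground set $E(G)$, whose independent sets are exactly the forests of $G$. That theorem asserts that $E(G)$ is a union of $k$ independent sets of $M$ if and only if every $S \subseteq E(G)$ satisfies $|S| \leq k \cdot r_M(S)$, where $r_M(S) = |V(G[S])| - c(G[S])$ and $c$ counts the connected components of the subgraph spanned by $S$. Decomposing $S$ along the components of $(V(G[S]), S)$ and applying the definition of ${\rm m}(G)$ to each component with at least two vertices yields this inequality, so the desired decomposition into $k$ forests exists.

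The main obstacle is the upper bound, and in particular what one is willing to assume. If matroid union is available, the proof is essentially a one-line reduction as above; otherwise one must either prove (at least the graphic case of) matroid union, or give a direct combinatorial argument. A typical self-contained route is an augmenting-path proof: starting from a partial decomposition into $k$ forests that omits some edge $e = uv$, one shows that the density hypothesis permits iterative rerouting of edges among the forests so as to eventually include $e$; any obstruction to such an augmentation would produce a subgraph $H$ violating $|E(H)| \leq k(|V(H)|-1)$, contradicting $k = {\rm m}(G)$. Making this exchange process terminate while preserving acyclicity in every color class is the most delicate step.
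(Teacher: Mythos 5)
The paper does not prove this theorem; it is stated as a known result and attributed to Nash-Williams' 1964 paper, so there is no ``paper's proof'' to compare against. Your argument is nonetheless a correct and standard proof. The lower bound counting is fine: restricting $k$ covering forests to a subgraph $H$ gives $|E(H)| \leq k\bigl(|V(H)|-1\bigr)$, and since $k$ is an integer the ceiling in ${\rm m}(G)$ causes no trouble. The upper bound via the matroid base-covering theorem applied to the graphic matroid is sound: for $S \subseteq E(G)$, writing the edge-induced subgraph as a disjoint union of connected components $C_1,\dots,C_m$ and noting that $r_M(S) = \sum_i\bigl(|V(C_i)|-1\bigr)$ while each nontrivial $C_i$ satisfies $|E(C_i)| \leq k\bigl(|V(C_i)|-1\bigr)$ by definition of ${\rm m}(G) = k$ gives $|S| \leq k\,r_M(S)$, so the decomposition into $k$ forests exists. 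Your closing remark is also apt: without matroid union one falls back on an augmenting-path or exchange argument, which is essentially Nash-Williams' original route and is the delicate part if one wants a self-contained proof. In short, no gap; the only discrepancy is that the paper cites rather than proves the statement.
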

 
This paper is concerned with the induced arboricity.
Recall that a subgraph $H$ of a graph $G=(V,E)$ is \emph{induced} if for any $u, v\in V(H)$, if $uv \in E(G)$  then $uv\in E(H)$.
We define a subgraph $H$ of $G$ to be \emph{weak induced} if each component of $H$ is an induced subgraph of $G$, but $H$ itself is not neccessarily an induced subgraph of $G$.

\begin{definition}
 The {\bf induced arboricity} $\iarb(G)$, respectively {\bf weak induced arboricity} $\wiarb(G)$, is the smallest number of induced, respectively weak induced forests, covering $E(G)$.
\end{definition}

For a complete graph $K_n$, $n\geq 2$, we see that $\arb(K_n)=\lceil n/2 \rceil$,   $\iarb(K_n) = \binom{n}{2}$ since there could be at most one edge in any induced forest of $K_n$, and
$\wiarb(K_n) =   n - 1 + (n \bmod 2)$ because any weak induced forest of $K_n$ is a matching.
Note that for the arboricity as well as the weak induced arboricity we can assume that an optimal set of forests covering the edges forms a partition of the edge set.
For the induced arboricity the smallest number of induced forests covering the edges might be smaller than the smallest number of induced forests partitioning the edge set of a graph.

\begin{proposition}\label{prop:cover-part}
 For each $k \geq 2$ there is a graph $G$ with $\iarb(G)=k$ and an edge $e \in E(G)$, such that in any cover of $E(G)$ with $k$ induced forests, $e$ is contained in all $k$ of them.
\end{proposition}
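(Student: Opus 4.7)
The plan is to construct, for each $k\ge 2$, an explicit graph $G_k$ together with a distinguished edge $e=uv$ such that $\iarb(G_k)=k$ and every cover of $E(G_k)$ by $k$ induced forests contains $e$ in all $k$ of them. The guiding observation is simple: since an induced forest is determined by its vertex set, having $e$ in a forest $F_i$ amounts to having both $u$ and $v$ in the vertex set $S_i$ of $F_i$. So the task reduces to exhibiting a $G_k$ where every $S_i$ in every $k$-cover must contain both $u$ and $v$. In particular, $G_k$ must have no triangle through $e$ (otherwise $\{u,v\}\cup\{w\}$ with $w\in N(u)\cap N(v)$ would never be a forest, and $u,v$ could not coexist with a third vertex in $S_i$).

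I would then design $G_k$ to satisfy three properties: (a) every edge of $G_k$ is contained in some induced forest that also contains $u$ and $v$ (so that an optimal cover using only $\{u,v\}$-containing forests is feasible at all); (b) $G_k$ contains $k$ essentially disjoint ``obstructions'' — for instance, $k$ induced subgraphs each requiring a separate forest — so that $\iarb(G_k)\ge k$; and (c) these obstructions are placed so that each of the $k$ forests in an optimal cover must spend exactly one unit on one obstruction, and the only way to cover an obstruction within a single induced forest is to include both $u$ and $v$ in the forest's vertex set. A concrete candidate is to glue $k$ copies of a carefully chosen gadget $H$ along the common edge $e$, where $H$ is tuned so that covering the edges of one copy forces $u,v$ into the covering forest, and no single forest can simultaneously serve two copies.

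With such a $G_k$ in hand I would verify: (i) $\iarb(G_k)\le k$ by explicitly exhibiting one forest per copy of $H$, each with vertex set including $u$ and $v$; (ii) $\iarb(G_k)\ge k$ by an edge-counting or induced-subgraph obstruction argument showing that $k$ forests are necessary; and finally (iii) the forcing property by contradiction — suppose some cover $F_1,\dots,F_k$ has $u\notin S_j$ (say). Then $F_j$ covers no edge incident to $u$, so these edges and all obstructions involving them must be handled by the remaining $k-1$ forests; combined with the lower-bound argument from (ii), this would exceed the available $k-1$ forests and produce a contradiction.

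The main obstacle is step (iii): concretely designing the gadget $H$ so that any ``large'' induced forest avoiding $u$ or $v$ is structurally too weak to free up budget in the other forests. A naive construction such as $k$ paths of length three from $u$ to $v$ plus the edge $e$ turns out to admit a 2-cover using the entire graph minus $u$ as one forest, so the forcing fails; the difficulty is to build enough internal structure inside the gadget (for instance, internal edges that break the forest property of $G_k-u$ and $G_k-v$) without destroying property (a). Calibrating these two competing requirements, and then carrying out the case analysis in (iii) showing that no forest of any $k$-cover can omit $u$ or $v$, is where the real work lies.
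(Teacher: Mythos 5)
Your proposal stops short of a proof: you identify the right goal (force both endpoints of $e$ into every forest of every $k$-cover) and you correctly note that this is equivalent to constraining the vertex sets of the forests, but you explicitly leave the crucial gadget unconstructed and acknowledge that step~(iii) — showing no forest in a $k$-cover can omit an endpoint of $e$ — "is where the real work lies." An outline that ends at the hard step is not a proof, and your one concrete attempt ($k$ internally disjoint paths of length three from $u$ to $v$) is correctly dismissed by you as failing.

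Beyond the incompleteness, your proposed architecture points in a different direction than what works. You want to glue $k$ copies of a gadget along a common edge $e$, aiming to make each gadget an "obstruction" that individually forces $u,v$ into a forest. The paper instead takes $e$ to join two vertex-disjoint copies of $K_{k,k+1}$, one endpoint of $e$ in the $(k+1)$-side of each copy. The decisive observation there is that every induced forest in $K_{k,k+1}$ is a star, hence has at most $k+1$ edges, which gives $\iarb(K_{k,k+1})\ge k$ and — more importantly — shows that the \emph{only} way to cover $E(K_{k,k+1})$ with exactly $k$ induced forests is to take, for each vertex $a$ in the $k$-side, the star with center $a$ and all $k+1$ leaves. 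In particular every vertex of the $(k+1)$-side lies in \emph{all} $k$ forests of any optimal cover, so the forcing comes from rigidity of the optimal cover on each side, not from a per-gadget obstruction as you envision. That rigidity is what your sketch lacks and what makes the argument go through. Finally, a small but telling slip: the claim that $G$ "must have no triangle through $e$" is not a logical necessity — a triangle $uvw$ only excludes $w$ from any forest containing both $u$ and $v$, it does not prevent $u,v$ from coexisting in a forest — so this shouldn't be treated as a design constraint, even though the paper's construction happens to satisfy it.
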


The notion of induced arboricity has been considered only recently, see for example a paper by some of the authors of the current paper, \cite{Axe-18}.
However, a special case of this parameter, when induced forests are required to be induced matchings, is a classical parameter, the strong chromatic index.
The \emph{strong chromatic index} $\chi'_s(G)$, introduced by Erd\H{o}s and Ne\v{s}et\v{r}il (c.f.~\cite{Fau-90}), is the smallest $k$ for which $E(G)$ can be covered with $k$ induced matchings.
In addition, the \emph{star arboricity} $\sarb(G)$ was considered extensively, where $\sarb(G)$ is the smallest number of star-forests, that is, forests with each component being a star, needed to cover the edges of $G$.

\medskip

For a graph parameter $p$ and a class of graphs $\cF$, let $p(\cF) = \sup\{p(G) \colon G\in \cF\}$.
We are concerned with induced arboricity $\iarb(\cF)$ and weak induced arboricity $\wiarb(\cF)$.
In addition, we consider the \emph{induced star arboricity} $\isarb(G)$ and \emph{weak induced star arboricity} $\isarb(G)$ of a graph $G$ defined as the smallest number of induced star-forests and weak induced star-forests, respectively, covering the edges of $G$.
Note that for the induced star arboricity as well as the weak induced star arboricity we can assume that an optimal set of forests covering the edges forms a partition of the edge set.

\medskip

Further note that every star-forest is a forest, an induced forest is also a weak induced forest, which in turn is also a forest.
Every matching is a weak induced star-forest and every induced matching is an induced star-forest.
Thus, denoting $\chi'(G)$ the edge-chromatic number of $G$, we have

\begin{gather}
  \arb(G) \leq \wiarb(G) \leq \iarb(G) \leq \isarb(G) \leq \chi'_s(G),\label{eq:basic:inequalities1}\\
  \wiarb(G)\leq \wisarb(G) \leq \isarb(G),\label{eq:basic:inequalities2}\\
  \arb(G)\leq \sarb(G) \leq \wisarb(G) \leq \chi'(G) \leq \chi'_s(G).\label{eq:basic:inequalities3}
\end{gather}

\begin{proposition}\label{prop:basic-inequalities}
 For any graph $G$ we have
 $\sarb(G) \leq 2\arb(G)$,
 $\wisarb(G) \leq 2 \wiarb(G)$, and
 $\isarb(G) \leq 3 \iarb(G)$.
\end{proposition}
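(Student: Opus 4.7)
The plan is to prove all three bounds using a uniform strategy: start with an optimal cover of $E(G)$ by forests (of the respective type), and split each forest into few star-forests (of the respective type) via an appropriate depth-based edge-coloring. For any forest $F$, root each component arbitrarily, and for every edge $uv$ with $v$ the parent of $u$, assign the edge the color $d(v) \bmod k$, where $k=2$ for the first two bounds and $k=3$ for the third. A routine check shows that each color class is a star-forest: a vertex $w$ appears in color $i$ either as the center of a star (through its edges to children, when $d(w) \equiv i \pmod k$) or as a leaf (through its edge to its parent, when $d(w) \equiv i+1 \pmod k$), and these two situations are mutually exclusive.

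For $\sarb(G) \leq 2\arb(G)$ this immediately provides two star-forests per forest of the cover. For $\wisarb(G) \leq 2\wiarb(G)$ I would further verify that each component of each color class is induced in $G$: such a component is a star $K_{1,d}$ whose leaves are the children of its center $v$ in $F$, and since the component of $F$ containing $v$ is induced in $G$ while siblings in a tree are non-adjacent in $F$, the leaves are pairwise non-adjacent in $G$ as well.

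For $\isarb(G) \leq 3\iarb(G)$ one needs the entire star-forest $S_i$, not just each of its components, to be induced in $G$. Given $x,y \in V(S_i)$ with $xy \in E(G)$, induced-ness of $F$ forces $xy$ to be a parent-child edge of $F$; the vertices of $V(S_i)$ lie at depth $\equiv i \pmod 3$ (centers) or $\equiv i+1 \pmod 3$ (leaves), and a short case analysis on which of $x,y$ is the parent shows that the parent must lie at depth $\equiv i$, so $xy$ has color $i$ and belongs to $S_i$. The main obstacle, and the reason for taking $k=3$ rather than $k=2$, is precisely this verification: with $k=2$ a leaf at odd depth has even-depth children that may be centers of other $S_0$-stars, producing a $G$-edge inside $V(S_0)$ that is not an $S_0$-edge and breaking induced-ness. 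Working modulo $3$ eliminates this issue, since the children of a leaf at depth $\equiv i+1$ lie at depth $\equiv i+2$, which is neither the center depth nor the leaf depth of $S_i$, so they are absent from $V(S_i)$ altogether.
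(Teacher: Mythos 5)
Your proof is correct and follows essentially the same approach as the paper: root each tree of each forest, group the edges into layers by the depth of the parent endpoint, and collect layers by residue modulo $2$ (for $\sarb$ and $\wisarb$) or modulo $3$ (for $\isarb$). The paper states the construction tersely and leaves the verification implicit; your write-up spells out why each color class is a (weak induced / induced) star-forest, including the key point that modulo~$3$ is needed to keep the full star-forest induced rather than just its components.
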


This shows that star arboricities behave in a sense similar to their respective arboricities.
In particular Proposition~\ref{prop:basic-inequalities} implies that for any graph class $\cF$ we have
\begin{align*}
 \arb(\cF)\neq\infty &\Longleftrightarrow \sarb(\cF)\neq\infty,\\
 \iarb(\cF) \neq \infty &\Longleftrightarrow \isarb(\cF) \neq \infty,\\
 \wiarb(\cF) \neq \infty &\Longleftrightarrow \wisarb(\cF) \neq \infty.
\end{align*}
 
Next, we characterize exactly all graph classes $\cF$ for which $\iarb(\cF)\neq\infty$ and for which $\wiarb(\cF)\neq\infty$.
The characterization for induced arboricity is in terms of so-called shallow minors, a notion introduced by Ne{\v{s}}et{\v{r}}il and Ossona de Mendez~\cite{Nes-12}. %, and in terms of the maximum average degree.
A graph $H$ on vertex set $\{v_1,\ldots,v_n\}$ is a \emph{$\half$-shallow minor} of $G$ if there exist pairwise vertex-disjoint stars $S_1,\ldots,S_n$ in $G$ with centers $c_1,\ldots,c_n$ such that for each edge $v_iv_j$ in $H$ there is an edge in $G$ between $c_i$ and $S_j$, or between $c_j$ and $S_i$.
The set of all $\half$-shallow minors of $G$ is denoted $G \nabla \half$ and the set of all $\half$-shallow minors of graphs from a class $\cF$ is denoted $\cF \nabla \half$.

\begin{theorem}\label{thm:induced-for-classes}
 For any graph class $\cF$ we have
 \begin{align*}
   \iarb(\cF) \neq \infty
  &\Longleftrightarrow
  \chi(\cF \nabla \half) \neq \infty\\
  \text{and}\qquad\wiarb(\cF) \neq \infty
  &\Longleftrightarrow
  {\rm m}(\cF) \neq \infty
  \overset{\emph{\cite{Nas-64}}}{\Longleftrightarrow}
  \arb(\cF) \neq \infty.
 \end{align*}
\end{theorem}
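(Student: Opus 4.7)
The theorem is a conjunction of two characterizations; I address the weak induced and induced cases in turn. For the weak induced arboricity, the inner equivalence $\arb(\cF)\neq\infty \Leftrightarrow {\rm m}(\cF)\neq\infty$ is Nash--Williams (Theorem~\ref{thm:Nash-Williams}), and $\wiarb(\cF)\neq\infty\Rightarrow\arb(\cF)\neq\infty$ is immediate from~\eqref{eq:basic:inequalities1}. For the remaining implication $\arb(\cF)\neq\infty\Rightarrow\wiarb(\cF)\neq\infty$ I would argue via degeneracy: if $\arb(G)\leq k$ then every subgraph of $G$ has average degree below $2k$, so $G$ admits a vertex ordering with at most $2k-1$ back-neighbors per vertex. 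Greedily color the vertices with $2k$ colors and rank each back-edge by the position of its lower endpoint among the back-neighbors of its upper endpoint. Each triple (color of upper vertex, color of lower vertex, rank) picks out a star-forest centered at lower endpoints whose leaves are monochromatic and hence $G$-independent, so each component is induced in $G$ and the triple class is a weak induced forest. This yields $\wiarb(G)\leq(2k)^2(2k-1)$.

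For the forward direction of the induced arboricity characterization, $\iarb(\cF)\neq\infty\Rightarrow\chi(\cF\nabla\half)\neq\infty$, fix $H\in\cF\nabla\half$ realized by vertex-disjoint stars $S_1,\ldots,S_n$ with centers $c_i$ inside some $G\in\cF$, and fix an induced-forest cover $F_1,\ldots,F_k$ of $E(G)$ with $k=\iarb(G)$. For each edge $v_iv_j$ of $H$ select a witness edge in $G$ (between $c_i$ and $S_j$ or between $c_j$ and $S_i$) and record the forest $F_c$ containing it; this edge-$k$-colors $H$. The plan is to bound $\chi(H_c)$ by a function of $k$ for each color class $H_c$ and then combine via a product coloring. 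The key structural input is that the $F_c$-neighborhood of any center $c_i$ is $G$-independent (since $F_c$ is induced): each star $S_j$ therefore contributes to this neighborhood either the center $c_j$ alone or a $G$-independent set of its leaves, but never both. Together with the forest structure of $F_c$, this should bound the degeneracy, and hence the chromatic number, of $H_c$ in terms of $k$ alone.

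For the reverse direction, $\chi(\cF\nabla\half)\neq\infty\Rightarrow\iarb(\cF)\neq\infty$, set $f=\chi(\cF\nabla\half)$. Taking all stars as singletons realizes $G$ as its own $\half$-shallow minor, so $\chi(G)\leq f$ and in particular $G$ is $(f-1)$-degenerate; fix such an ordering and a proper $f$-coloring $\phi$. To construct an induced-forest cover I apply the chromatic hypothesis to a controlled family of shallow minors of $G$: for each vertex paired with one of its back-neighbors, consider the shallow minor obtained by grouping them into a two-vertex star while keeping all other vertices as singletons. Combining the colorings of these shallow minors with $\phi$ and the back-rank classifies each edge of $G$ into boundedly many types, and the chromatic condition on the shallow minor rules out precisely the chords that would prevent a type from being an induced forest. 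The main obstacle is this last direction: translating chromatic control over \emph{quotients} of $G$ into a bona fide induced-forest decomposition of $G$ itself, while keeping the number of edge classes bounded by a function of $f$, is where the combinatorial work of the proof sits.
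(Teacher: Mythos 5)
Your treatment of the weak-induced part is correct and self-contained: the rank-plus-two-colors trick gives weak induced star-forests and hence $\wiarb(G)\leq(2\arb(G))^2(2\arb(G)-1)$, which is a cubic bound where the paper gets the quadratic $\wiarb(G)\leq\sarb(G)\chi(G)\leq4\arb(G)^2$ by coloring only the leaf endpoints, but either suffices for finiteness. The route is genuinely a little different (degeneracy ordering and back-edge ranks versus a star-arboricity decomposition refined by a proper coloring), though the underlying idea of cutting a star-forest by a proper coloring of the leaf side is the same.

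The induced part, however, has real gaps in both directions. For $\iarb(\cF)\neq\infty\Rightarrow\chi(\cF\nabla\half)\neq\infty$ you propose to edge-color $H$ by the witnessing forest and then bound the degeneracy of each color class $H_c$, but you never establish that this degeneracy is bounded, and with general induced forests it is not at all clear that it is: the observation that $N_{F_c}(c_i)$ is $G$-independent does not prevent a single center from having arbitrarily many $F_c$-neighbors spread over many stars. The paper sidesteps this by first passing from induced forests to \emph{induced star-forests} via $\isarb(G)\leq3\iarb(G)$ (Proposition~\ref{prop:basic-inequalities}) and then coloring $v_i$ by the pair $\bigl(\varphi(c_i),\,A_i\bigr)$, where $A_i$ records which star-forests meet $S_i$; the star-forest structure is what makes the properness argument work (a $P_4$ through $c_i$, $w$, $c_j$ is forbidden in a star-forest but perfectly allowed in an induced forest, so your version of the key lemma fails). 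For $\chi(\cF\nabla\half)\neq\infty\Rightarrow\iarb(\cF)\neq\infty$ you sketch a direct construction of an induced-forest cover from colorings of various two-vertex-grouping shallow minors, and you explicitly acknowledge this is the unfinished core; the paper does not attempt this and instead observes that $\cF\subseteq\cF\nabla\half$ and $\SD(\cF)\subseteq\cF\nabla\half$, invokes Dvo\v{r}\'ak's theorem that bounded $\chi(\SD(\cF))$ is equivalent to bounded $\chiacyc(\cF)$, and finishes with the known bound $\iarb(G)\leq\binom{\chiacyc(G)}{2}$ from~\eqref{eq:iaChiAcyc}. Reproving Dvo\v{r}\'ak's result from scratch is a substantial undertaking and is not something you should expect to recover in a few lines.
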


The \emph{acyclic chromatic number}, $\chiacyc(G)$, of a graph $G$ is the smallest number of colors in a proper vertex-coloring such that the union of any two color classes forms an induced forest of $G$.
A result in~\cite{Axe-18} gives the following dependencies between induced arboricity and the acyclic chromatic number
\begin{equation}
 \log_3(\chiacyc(G))\leq \iarb(G) \leq \binom{\chiacyc(G)}{2}.\label{eq:iaChiAcyc}
\end{equation}
Thus, for any class $\cF$ of graphs we have $\iarb(\cF) \neq \infty$ if and only if $\chiacyc(\cF)\neq\infty$.
In turn Dvo\v{r}\'ak~\cite{Dvo-08} characterizes classes $\cF$ with $\chiacyc(\cF)\neq\infty$ in terms of shallow topological minors.
For a graph $H$ let $\sd_1(H)$ denote the graph obtained from $H$ by subdividing each edge exactly once.
Let $\SD(G)=\{H \colon \sd_1(H)\subseteq G\}$ and for a family $\cF$ of graphs let $\SD(\cF)=\bigcup_{G\in\cF}\SD(G)$.
Note that $\SD(G)\subseteq G\nabla\half$ and hence $\SD(\cF) \subseteq \cF\nabla\half$~\cite{Nes-12}.
Dvo\v{r}\'ak~\cite{Dvo-08} proved that $\chiacyc(\cF)\neq\infty$ if and only if $\chi(\SD(\cF)) \neq \infty$.
This gives the following corollary of  Theorem~\ref{thm:induced-for-classes} which is of independent interest.
\begin{corollary}
For any graph class $\cF$ we have \[\chi(\cF \nabla \half) \neq \infty\Longleftrightarrow \chi(\SD(\cF))\neq\infty.\]
\end{corollary}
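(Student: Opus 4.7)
The plan is simply to chain the equivalences already collected in the paragraph preceding the corollary, so that the proof reduces to a short diagram-chase rather than any new combinatorial argument.

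For the forward direction, I would just invoke the inclusion $\SD(\cF) \subseteq \cF \nabla \half$ attributed to Ne{\v{s}}et{\v{r}}il and Ossona de Mendez~\cite{Nes-12}: every graph in $\SD(\cF)$ is already a $\half$-shallow minor of some member of $\cF$, so any proper coloring of $\cF \nabla \half$ with boundedly many colors restricts to one of $\SD(\cF)$. Hence $\chi(\SD(\cF)) \leq \chi(\cF \nabla \half)$, and in particular the right-hand side being finite forces the left-hand side to be finite as well.

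For the converse direction, I would assemble the three external facts that the excerpt has already put on the table. Assume $\chi(\SD(\cF)) \neq \infty$. By Dvo\v{r}\'ak's characterization~\cite{Dvo-08}, this is equivalent to $\chiacyc(\cF) \neq \infty$. Now apply the right-hand inequality in~(\ref{eq:iaChiAcyc}), namely $\iarb(G) \leq \binom{\chiacyc(G)}{2}$, to conclude $\iarb(\cF) \neq \infty$. Finally, the first equivalence of Theorem~\ref{thm:induced-for-classes} yields $\chi(\cF \nabla \half) \neq \infty$, closing the circle.

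There is no real obstacle: the content of the corollary is already packaged into Theorem~\ref{thm:induced-for-classes}, equation~(\ref{eq:iaChiAcyc}), and Dvo\v{r}\'ak's theorem, and the only quantitative step is noticing that $\SD(\cF) \subseteq \cF \nabla \half$ gives the (easy) forward direction directly. The entire write-up should fit in a few lines, and the only thing to be careful about is that both equivalences are used in the right direction so that the two ``$\neq \infty$'' conditions genuinely imply one another.
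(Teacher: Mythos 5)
Your proof is correct and essentially matches what the paper intends: the converse direction chains Dvo\v{r}\'ak's theorem, inequality~\eqref{eq:iaChiAcyc}, and Theorem~\ref{thm:induced-for-classes} exactly as the paper's surrounding paragraph sets up, while the forward direction follows directly from $\SD(\cF)\subseteq\cF\nabla\half$. The only slight difference is that you short-circuit the forward direction with the inclusion rather than running the full chain of equivalences backwards, which is in fact the more economical route (and consistent with the internal structure of the paper's own proof of Theorem~\ref{thm:induced-for-classes}, whose ``$\chi(\cF\nabla\half)\neq\infty \Rightarrow \iarb(\cF)\neq\infty$'' half already passes through that same inclusion and Dvo\v{r}\'ak's theorem).
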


Theorem~\ref{thm:induced-for-classes} shows that the weak induced arboricity is a parameter behaving in a sense like arboricity.
However, induced arboricity is more complex and depends on the structure of the graph, not only on the density.
In particular for the class of $d$-degenerate graphs we give the maximum values for arboricities and weak arboricities and show that the induced arboricities are unbounded.
For several classes $\cF$ of graphs with more restricted structure, that is, $\chi(\cF \nabla \half) \neq\infty$, we provide bounds on the various arboricities.
Specifically we consider graphs of given acyclic chromatic number, given tree-width, and the class of all planar graphs. 

The \emph{degeneracy} of a graph $G$ is the smallest integer~$d$ such that every subgraph of $G$ has a vertex of degree at most~$d$.
If the degeneracy of $G$ is~$d \geq 1$, then $\arb(G) \leq d < 2\arb(G)$~\cite{Burr86}.
That is, the degeneracy is closely related to the arboricity and hence the density of a graph.
For an integer $d \geq 1$, let $\cD_d$ denote the class of all graphs of degeneracy at most~$d$.
Theorem~\ref{thm:induced-for-classes} shows that $\arb(\cD_d)$, $\sarb(\cD_d)$, $\wiarb(\cD_d)$, and $\wisarb(\cD_d)$ are bounded while $\iarb(\cD_d) = \isarb(\cD_d) = \infty$.
 
\begin{theorem}\label{thm:degeneracy}
 For the class $\cD_d$ of all graphs of degeneracy~$d$, $d \geq 2$, we have
 \begin{align*}
  \arb(\cD_d) &= d,\\
  \sarb(\cD_d) &= 2d, \quad \emph{\cite{Alo-92}},\\
  \wiarb(\cD_d) = \wisarb(\cD_d) &= 2d,\\
  \iarb(\cD_d) = \isarb(\cD_d) &= \infty, \quad \emph{\cite{Axe-18}}.
 \end{align*}
\end{theorem}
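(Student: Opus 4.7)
The theorem bundles four equalities; the citations settle $\sarb(\cD_d)=2d$ and $\iarb(\cD_d)=\isarb(\cD_d)=\infty$, so the plan reduces to proving $\arb(\cD_d)=d$ and $\wiarb(\cD_d)=\wisarb(\cD_d)=2d$.

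For $\arb(\cD_d)=d$, I would fix a $d$-degenerate ordering $v_1,\ldots,v_n$ of any $G\in\cD_d$. Every subgraph $H$ with $|V(H)|\geq 2$ inherits such an ordering, so summing back-degrees gives $|E(H)|\leq d(|V(H)|-1)$ and hence ${\rm m}(G)\leq d$; Nash--Williams (Theorem~\ref{thm:Nash-Williams}) yields $\arb(G)\leq d$. For the matching lower bound, $K_{d,n}$ is $d$-degenerate for every $n$ and a short calculation confirms ${\rm m}(K_{d,n})=d$ once $n>(d-1)^2$, so $\arb(K_{d,n})=d$.

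For the weak induced claims, \eqref{eq:basic:inequalities3} together with the cited $\sarb(\cD_d)=2d$ gives $\wisarb(\cD_d)\geq 2d$ for free. It remains to show (U) $\wisarb(G)\leq 2d$ for every $G\in\cD_d$---which by~\eqref{eq:basic:inequalities2} also yields $\wiarb(\cD_d)\leq 2d$---and (L) $\wiarb(G^\star)\geq 2d$ for some $G^\star\in\cD_d$. For (L), I would exploit that in a bipartite graph whose two colour classes are independent sets, every induced tree is a star: an induced subgraph meeting both sides in at least two vertices is a complete bipartite $K_{a,b}$, which is not a tree. Hence for such a $G^\star$, $\wiarb(G^\star)=\wisarb(G^\star)=\sarb(G^\star)$, and a bipartite $d$-degenerate graph with $\sarb=2d$---extractable from the construction underlying the Alon bound in~\cite{Alo-92}---witnesses $\wiarb(\cD_d)\geq 2d$.

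For (U), the plan is to greedily colour the back-edges of a $d$-degenerate ordering with $2d$ colours so that each colour class is a weak induced star forest. Writing $B_i=\{v_j:j<i,\,v_iv_j\in E(G)\}$, when processing back-edge $v_iv_j$ the colour chosen must avoid: (a) colours already on the other back-edges of $v_i$, ensuring that each colour class is a forest (at most $d-1$ colours forbidden); (b) colours on the back-edges of $v_j$, so that no colour class contains a directed path $v_i\to v_j\to v_k$ and each component is a star (at most $d$ forbidden); and (c) for each $v_{i'}\in (B_i\cap N(v_j))\setminus\{v_j\}$, the colour of edge $v_{i'}v_j$, so that the leaves of the star rooted at $v_j$ form an independent set in $G$ and the star is induced (at most $d-1$ forbidden). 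A naive sum yields $3d-2$ forbidden colours and only $\wisarb\leq 3d-1$. The main obstacle---and technical heart of the proof---is the refined count: using the $d$-degenerate structure to show that overlaps among the forbidden colour sets, especially among the colours on edges incident to $v_j$, reduce the effective count to $2d-1$, so that $2d$ colours always suffice.
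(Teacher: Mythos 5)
Your proposal is solid on $\arb(\cD_d)=d$ (same Nash--Williams argument as the paper) and on using the cited results, and you correctly identify that $\wisarb(\cD_d)\geq 2d$ is free from $\sarb(\cD_d)=2d$ and~\eqref{eq:basic:inequalities3}. However, both of your remaining steps have genuine gaps.

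For your lower bound (L), the premise is wrong: it is \emph{not} true that in a bipartite graph every induced tree is a star, and an induced subgraph meeting both sides in at least two vertices need not be complete bipartite (an induced $P_4$ is a bipartite counterexample). The statement ``every induced tree is a star'' is equivalent to $P_4$-freeness, and the bipartite $P_4$-free graphs are exactly disjoint unions of complete bipartite graphs. But such a graph that is $d$-degenerate is a disjoint union of $K_{a,b}$'s with $\min(a,b)\leq d$, and one checks $\sarb(K_{a,b})\leq\min(a,b)\leq d$ (put one star at each vertex of the small side), so no such $G^\star$ has $\sarb(G^\star)=2d$. Thus there is no witness of the type you describe. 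The paper instead builds a $d$-degenerate bipartite graph out of two ``layers'' of complete bipartite graphs ($A$ to $B$, then $S$ to $B_S$ for every $d$-subset $S$ of a very large $B$) and runs a pigeonhole argument: with fewer than $2d$ forests, one finds a set $S$ where the $d$ stars from $A$ are all monochromatic, and then a second set $T\subseteq B_S$ where the $d$ stars from $S$ are also monochromatic; any star $a\to S$ and any star $s\to T$ cannot lie in the same weak induced forest because $\{a\}\cup S\cup T$ induces a $4$-cycle, forcing at least $2d$ colours.

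For your upper bound (U), you correctly enumerate the three forbidden-colour contributions and observe that the naive count ($3d-2$) is too large, but then defer ``the technical heart of the proof'' (reducing the count to $2d-1$ via overlaps) without specifying how. That overlap does not materialize automatically from a greedy scan; it has to be \emph{engineered}. The paper proves a strengthened inductive claim: one maintains, for each vertex $v_i$, a set $S(v_i)\subseteq[2d]$ of exactly $d$ colours disjoint from the colours of all edges going left from $v_i$. When a new rightmost vertex $v_n$ with left-neighbours $w_1<\cdots<w_t$ ($t\leq d$) arrives, one colours $v_nw_i$ from $S(w_i)$ minus the colours on the edges $w_iw_j$ with $j>i$ (the set $S'(w_i)$), choosing distinct colours in order; the size bound $|S'(w_i)|\geq d-(t-i)\geq i$ makes this greedy choice possible. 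Afterwards $S(v_n)$ is chosen as $d$ colours avoiding all $c(v_nw_i)$. This invariant is exactly what prevents the three forbidden sets from being disjoint, and without stating and maintaining it your greedy argument does not close.
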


Note that $\wisarb(\cD_d) \leq 2d$ is a strengthening of the result $\sarb(\cD_d) \leq 2d$ from~\cite{Alo-92}.
  
The \emph{tree-width} of a graph $G$ which is the smallest integer~$k$ such that $G$ is a subgraph of some chordal graph of clique number $k+1$~\cite{Rob-86}.
For a positive integer $k$, let $\cA_k$ denote the class of all graphs of acyclic chromatic number at most~$k$ and let $\cT_k$ denote the class of all graphs of tree-width at most~$k$. 
It is well-known that $\cT_{k-1} \subsetneq \cA_{k}$ for each $k \geq 2$, that is, graphs of tree-width~$k-1$ have acyclic chromatic number at most~$k$.
Inequalities~\eqref{eq:iaChiAcyc} state a functional dependency between induced arboricity and the acyclic chromatic number.
Here we establish upper bounds and specific values for the other arboricities on $\cA_k$.
Further we show that for each $k\geq 2$ these bounds are attained on the strict subclass $\cT_{k-1}\subsetneq \cA_k$, a result of independent interest.

\begin{theorem}\label{thm:acyclic-chromatic-number}
 Let $k \geq 2$, $\cF \in \{\cA_{k}, \cT_{k-1}\}$, and $\epsilon=0$ if $k$ is even and $\epsilon=1$ if $k$ is odd.
 Then  
 \begin{align*}
   \arb(\cF) &= k-1,\\
   \sarb(\cF) &= k, \quad \emph{\cite{Din-98,Duj-07,Hak-96}},\\
   \iarb(\cF) &= \binom{k}{2},\\
   \isarb(\cF) &= 3\binom{k}{2},\\
   \wiarb(\cF) &= k-1 + \epsilon.
 \end{align*}
 Moreover, $\wisarb(\cT_{k-1}) = 2k-2$ and $2k-2 \leq \wisarb(\cA_{k}) \leq 2k-2 + 2\epsilon$.
\end{theorem}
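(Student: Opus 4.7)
The plan is to match, for each parameter, an upper bound on $\cA_k$ with a lower bound realised by a graph in $\cT_{k-1}$; because $\cT_{k-1}\subsetneq\cA_k$, this simultaneously pins down the value on both classes.

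\emph{Upper bounds on $\cA_k$.} Fix $G\in\cA_k$ with an acyclic $k$-coloring $\phi\colon V(G)\to[k]$, so that $G_{ij}:=G[\phi^{-1}(i)\cup\phi^{-1}(j)]$ is an induced forest for every $\{i,j\}\in\binom{[k]}{2}$. The $\binom{k}{2}$ forests $G_{ij}$ cover $E(G)$, giving $\iarb(G)\le\binom{k}{2}$ and, via Proposition~\ref{prop:basic-inequalities}, $\isarb(G)\le 3\binom{k}{2}$. Taking a proper edge-coloring of $K_k$ with $\chi'(K_k)=k-1+\epsilon$ colors groups pairs $\{i,j\}$ into matchings on $[k]$: for each such matching $M$, the union $\bigcup_{\{i,j\}\in M}G_{ij}$ has components lying inside individual $G_{ij}$'s and hence induced in $G$, so $\wiarb(G)\le k-1+\epsilon$. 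Splitting each weak induced forest into two star-forests by depth-parity after rooting gives $\wisarb(G)\le 2(k-1+\epsilon)=2k-2+2\epsilon$. Finally, summing $|E(G_{ij}[V(H)])|\le|V_i(H)|+|V_j(H)|-1$ over all pairs yields $|E(H)|\le(k-1)|V(H)|-\binom{k}{2}\le(k-1)(|V(H)|-1)$ for every $H\subseteq G$, so Theorem~\ref{thm:Nash-Williams} gives $\arb(G)\le k-1$.

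\emph{Lower bounds from $\cT_{k-1}$.} The graph $K_k\in\cT_{k-1}$ settles two bounds at once: every induced subgraph of $K_k$ is complete, so induced forests of $K_k$ have at most one edge and weak induced forests are matchings; hence $\iarb(K_k)=\binom{k}{2}$ and $\wiarb(K_k)=\chi'(K_k)=k-1+\epsilon$. For $\arb$, a sufficiently large $(k-1)$-tree $T$ on $n$ vertices has $((k-1)n-\binom{k}{2})/(n-1)>k-2$, so Theorem~\ref{thm:Nash-Williams} forces $\arb(T)=k-1$. The lower bounds $\isarb(\cT_{k-1})\ge 3\binom{k}{2}$ and $\wisarb(\cT_{k-1})\ge 2k-2$ require a real construction: I plan to take a base $K_k$ and, for each edge $e$, attach many copies of $K_k$ glued along $(k-1)$-cliques containing $e$, so that the surrounding triangles prevent $e$ from sharing an induced star with any other base edge and force three distinct induced star-forests per base edge (and, by a parallel counting argument, the claimed $2k-2$ lower bound for weak induced star-forests).

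\emph{Refined upper bound and main obstacle.} When $k$ is odd, the naive bound $\wisarb\le 2\wiarb$ only yields $2k$, not the target $2k-2$. To close this gap on $\cT_{k-1}$ I exploit the perfect elimination ordering of a $(k-1)$-tree: every vertex has at most $k-1$ earlier neighbours, and these form a clique. Adapting Alon's proof that $\sarb(\cD_{k-1})\le 2(k-1)$, I would colour each vertex's backward edges with $k-1$ colours and its forward edges with $k-1$ additional colours, and verify that the clique-elimination property forces every star-forest component to be an induced subgraph of $G$. The main combinatorial obstacle is the lower-bound construction: the augmented $(k-1)$-tree must be calibrated so that the local density of $K_k$-copies around each edge forces exactly the three-fold overhead in $\isarb$ and the additive correction $2k-2$ in $\wisarb$, while preserving tree-width at most $k-1$ throughout.
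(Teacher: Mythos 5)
Your upper bounds track the paper's argument closely: both use the $\binom{k}{2}$ bipartite color-class subgraphs $G_{ij}$ of an acyclic $k$-coloring, Nash--Williams plus a density count for $\arb$, an edge-coloring of $K_k$ for $\wiarb$, and Proposition~\ref{prop:basic-inequalities} for the star versions. The lower bounds from $K_k$ for $\iarb$ and $\wiarb$ also coincide. However, the proposal has genuine gaps precisely where you flag the ``main obstacle.''

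First, the two hard lower bounds --- $\isarb(\cT_{k-1})\ge 3\binom{k}{2}$ and $\wisarb(\cT_{k-1})\ge 2k-2$ --- are left as a plan, and the sketched construction (``a base $K_k$ with copies of $K_k$ glued along $(k-1)$-cliques containing each edge'') is not the paper's $G_k$ and is unlikely to work as stated. In particular $\isarb(K_k)=\chi'_s(K_k)=\binom{k}{2}$, and locally decorating each edge with $K_k$'s does not obviously inflate this to $3\binom{k}{2}$; the paper instead builds $G_k$ from the $(k-1)$-st power of a long path (to manufacture a vertex of large out-degree in a leaf-to-center orientation), augments the cliques $S_i',S_i''$ with apex vertices $w_i',w_i''$, and then attaches hanging $K_k$'s to \emph{every} vertex. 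The pigeonhole argument on out-degrees, plus the disjointness of the ``power-of-a-path'' star-forests from the ``hanging-clique'' star-forests, is the engine of Lemma~\ref{lem:Gk}, and none of that machinery appears in your sketch. Second, the refined upper bound $\wisarb(\cT_{k-1})\le 2k-2$ is also only sketched. The paper derives this not from chordality directly but from the general bound $\wisarb(\cD_d)\le 2d$ of Theorem~\ref{thm:degeneracy}, whose proof is an induction along a degeneracy ordering maintaining, at each vertex $v$, a reserved palette $S(v)$ of $d$ colors disjoint from the colors of $v$'s back-edges; your ``backward/forward $k-1$ colors each'' outline omits this invariant, and without it the star-forests need not be weak induced. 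Third, the proposal never addresses $\sarb(\cF)=k$ at all; the paper handles the upper bound via Hakimi et al.\ ($\sarb(G)\le\chiacyc(G)$) and the lower bound via Dujmovi\'c--Wood. These three omissions mean the proposal, as written, proves only the easy half of the theorem.
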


Finally we consider the family $\cP$ of planar graphs. Note that $\cP\subseteq \cA_5$~\cite{Bor-79} and $\cP\not\subseteq\cT_{k}$ for any $k\in\N$~\cite{Rob-2-86}.

\begin{theorem}\label{thm:planar}
 For the class $\cP$ of all planar graphs we have
 \begin{align*}
  \arb(\cP) &= 3,\\
  \sarb(\cP) &= 5, \quad \emph{\cite{Alg-89,Hak-96}},\\
  8 \leq \iarb(\cP) &\leq 10,\\
  18 \leq \isarb(\cP) &\leq 30,\\
  4\leq \wiarb(\cP) &\leq 5,\\
  6 \leq \wisarb(\cP) &\leq 10.
\end{align*}

For the class $\cO$ of all outerplanar graphs, we have 
\[
 \iarb(\cO) = 3, \quad \isarb(\cO) = 9, \quad \wiarb(\cO) = 3, \quad \wisarb(\cO) = 4.
\]
\end{theorem}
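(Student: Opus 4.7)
The plan is to handle each of the ten inequalities separately: every upper bound comes from embedding $\cP$ or $\cO$ into a graph class already treated in an earlier theorem, while every lower bound requires an explicit extremal (outer)planar graph.

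For the planar upper bounds, $\arb(\cP) \leq 3$ follows directly from Nash-Williams (Theorem~\ref{thm:Nash-Williams}) together with the Euler bound $|E(H)| \leq 3|V(H)|-6$, and $\sarb(\cP) = 5$ is the cited classical result. For the four induced parameters I would invoke Borodin's theorem $\cP \subseteq \cA_5$ and apply Theorem~\ref{thm:acyclic-chromatic-number} with $k=5$ (so $\epsilon=1$), yielding $\iarb(\cP) \leq \binom{5}{2} = 10$, $\isarb(\cP) \leq 3\binom{5}{2} = 30$, $\wiarb(\cP) \leq 5$, and $\wisarb(\cP) \leq 2\cdot 5 - 2 + 2 = 10$. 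For outerplanar I would use $\cO \subseteq \cT_2$ and the same theorem with $k=3$ to obtain $\iarb(\cO) \leq 3$, $\isarb(\cO) \leq 9$, $\wiarb(\cO) \leq 3$, and $\wisarb(\cO) \leq 2\cdot 3 - 2 = 4$.

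For the lower bounds, $\arb(\cP) \geq 3$ is witnessed by any planar triangulation on $n \geq 5$ vertices via Nash-Williams, and the triangle $K_3$ is outerplanar with no two edges fitting into a single induced or weak induced forest, giving $\iarb(\cO), \wiarb(\cO) \geq 3$ immediately. For the remaining six lower bounds --- $\iarb(\cP) \geq 8$, $\isarb(\cP) \geq 18$, $\wiarb(\cP) \geq 4$, $\wisarb(\cP) \geq 6$, $\isarb(\cO) \geq 9$, and $\wisarb(\cO) \geq 4$ --- the plan is to exhibit explicit (outer)planar graphs $G$ and argue by a counting argument on triangles that fewer forests cannot cover $E(G)$. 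The guiding principle is that in any induced forest containing an edge $uv$, every common neighbor of $u$ and $v$ must lie outside the forest, so in a triangle-dense region each forest covers only a small fraction of the edges. For star-forest variants one additionally restricts which pairs of adjacent edges can share a component, and for weak induced variants the requirement ``induced'' weakens to ``induced per component''.

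The main obstacle is the design and analysis of these explicit extremal graphs. I expect the hardest pieces to be $\iarb(\cP) \geq 8$ and $\isarb(\cP) \geq 18$, for which natural candidates include the icosahedron, iterated stackings of planar triangulations, or bespoke constructions in the spirit of~\cite{Axe-18}; the lower bound proof would then proceed by a structural case analysis on how any purported small cover can interact with the many triangles of $G$. The remaining four lower bounds should follow from suitably chosen small planar or outerplanar triangulations together with direct double counting on triangle--edge incidences, once the right graph is identified.
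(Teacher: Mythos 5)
Your upper bounds are correct and take exactly the paper's route: Borodin's $\cP\subseteq\cA_5$ plus Theorem~\ref{thm:acyclic-chromatic-number} with $k=5$ (and $\cO\subseteq\cT_2$ with $k=3$ for outerplanar), together with Nash-Williams for $\arb$ and $K_3$ for $\iarb(\cO),\wiarb(\cO)\geq 3$. That part is fine.

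The genuine gap is the lower-bound half, which is where the entire content of the theorem lives, and your proposal offers only a heuristic (``counting argument on triangles'' plus a list of candidate graphs) rather than an argument. Two things are missing that you would have to discover. First, for $\isarb(\cP)\geq 18$, $\wisarb(\cP)\geq 6$, $\isarb(\cO)\geq 9$ and $\wisarb(\cO)\geq 4$ the paper does not invent a new graph at this point --- it uses the graphs $G_4$ and $G_3$ already built in Section~\ref{subsec:Gk}, whose lower bounds are established in Lemma~\ref{lem:Gk} by an orientation/out-degree argument (orient each edge from leaf to star center, find a vertex of large out-degree inside a clique $S_i'$, and add the $k-1$ disjoint ``hanging clique'' forests); this is not triangle-edge double counting, and neither the icosahedron nor a generic stacked triangulation is known to replace it. Second, and more seriously, your heuristic does not reach $\iarb(\cP)\geq 8$: the paper's proof hinges on odd double wheels $DW_\ell$ and Lemma~\ref{lem:double-wheels}, whose driving fact is that any induced forest containing $\geq 2$ edges from a hub $v$ to the rim $C_\ell$ hits an independent set of $C_\ell$, hence at most $(\ell-1)/2$ rim vertices, giving $|A_v|+2|B_v|\geq 5$. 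This bound is about the odd cycle structure of the rim, not triangle incidences. Even that only yields $7$; getting to $8$ requires the two-stage amplification: glue a hub of a $DW_7$ onto each of the seven vertices of a $DW_5$, use part (i) of the lemma to force some vertex of the $DW_5$ to appear in four forests, then apply part (ii) to the $DW_7$ hanging there. Without identifying the $G_k$ graphs, the double-wheel lemma, and this gluing trick, the six hard lower bounds remain unproved, so the proposal as written does not establish the theorem.
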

 
The bounds $8 \leq \iarb(\cP) \leq 10$ are also proved in~\cite{Doerr17}.
 
We prove Propositions~\ref{prop:cover-part} and~\ref{prop:basic-inequalities} and give the main construction in  Section~\ref{sec:general-observations}.
Theorems~\ref{thm:induced-for-classes}--\ref{thm:planar} are proved in the respective sections.

\section{Preliminary results}
\label{sec:general-observations} 

\subsubsection*{Proof of Proposition~\ref{prop:cover-part}}
 
 Let $k \geq 2$.
 Consider a complete bipartite graph $H = K_{k,k+1}$ with bipartition $(A,B)$, $|A| = k$, $|B| = k+1$.
 Every induced forest in $H$ is a star.
 Hence any induced forest $F$ in $H$ contains at most $\max(|A|,|B|) = |B| = k+1$ edges, implying that $\iarb(H) \geq |E(H)| / (k+1) = k$.
 Moreover, an induced forest $F$ has $k+1$ edges if and only if $F$ contains exactly one vertex of $A$ and all vertices of $B$, and taking one such forest $F$ for each vertex in $A$ gives the unique (up to isomorphism) cover of $E(H)$ with $|A| = k$ induced forests. Thus $\iarb(H)=k$.
 Note that every vertex in $B$ is contained in all $k$ forests.
 
 \begin{figure}[tb]
  \centering
  \includegraphics{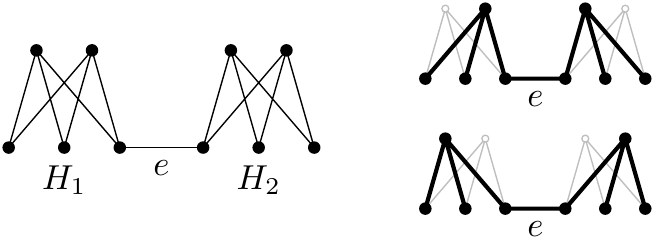}
  \caption{Left: A graph $G$ with an edge $e$ and $\iarb(G) = 2$, where in every cover of $E(G)$ with two induced forests $e$ is contained in both forests.
   Right: The unique (up to isomorphism) cover of $E(G)$ with two induced forests.}
 \label{fig:cover-vs-partition}
\end{figure}
 
 Now let $G$ be the graph obtained from two vertex-disjoint copies $H_1,H_2$ of $K_{k,k+1}$, together with an edge $e$ joining a vertex from the larger bipartition part of $H_1$ with  a vertex from the larger bipartition part of  $H_2$ .
 See Figure~\ref{fig:cover-vs-partition} for an illustration.
 Taking $k$ induced forests in $H_1$ covering $E(H_1)$ and $k$ induced forests in $H_2$ covering $E(H_2)$, we can pair these up and add $e$ to each of them to get $k$ induced forests in $G$ covering $E(G)$.
 Thus $k \geq \iarb(G) \geq \iarb(H_1) = k$.
 Observe that this gives the unique (up to isomorphism) cover of $E(G)$ with $k=\iarb(G)$ induced forests, and that $e$ is contained in all $k$ such forests.\qed

\subsubsection*{Proof of Proposition~\ref{prop:basic-inequalities}}
 
 For a set $\cF$ of forests, and each forest $F\in \cF$, fix a root of each tree in $F$ arbitrarily and 
 let $F_i$ be a subgraph of $F$ formes by the  edges  at distance~$i$ from the respective root in $F$.
 We refer to $F_i$'s as layers of $F$. 
 Then we see that each layer $F_i$ forms a star-forest and $F$ is the union of all $F_i$'s.
 
 \medskip
 
 Consider $\cF$ to be a smallest set of forests (respectively weak induced forests) covering $E(G)$.
 The forests formed by the even-indexed layers $\cup_{k\geq 0} F_{2k}$ or by the odd-indexed layers $\cup_{k\geq 0} F_{2k+1}$, $F\in \cF$, form star-forests (respectively weak induced star-forests) covering $E(G)$.
 This implies that $\sarb(G) \leq 2\arb(G)$ and $\wisarb(G) \leq 2 \wiarb(G)$.
 
 \medskip
  
 Let $\cF$ be a smallest set of induced forests covering $E(G)$.
 Consider each third layer of each forest and their unions:
 $\cup_{k\geq 0} F_{3k}$, $\cup_{k\geq 0} F_{3k+1}$, $\cup_{k\geq 0} F_{3k+2}$, $F\in \cF$.
 These $3|\cF|$ forests form induced star-forests covering $E(G)$.
 This implies that $\isarb(G) \leq 3 \iarb(G)$.\qed

\subsection{\boldmath Construction of the graph $G_k$}\label{subsec:Gk}
 For each $k\geq 2$ we shall construct a graph $G_k$ which is used in the proofs later.
 Let $G_2$ be the $10$-vertex tree in the left part of Figure~\ref{fig:G2-and-G3}.
 For $k \geq 3$ we shall define $G_k$ as follows.
 Let $H_1=H_1(k)$ be the $(k-1)^{\rm st}$ power of a path $P$ on $k(k-1)^2$ vertices, that is, $V(H_1)=V(P)$ and all pairs of vertices at distance at most~$k-1$ in $P$ are edges in $H_1$.
 Split the vertex set of $H_1$ into $k(k-1)/2$ pairwise disjoint sets of $2(k-1)$ consecutive vertices each.
 Let $\cS$ be the family of these sets. 
 For each $S_i\in \cS$, let $S_i=S_i'\cup S_i''$, where $S_i'$ consists of the first $k-2$ vertices and the $k^{\rm th}$ vertex in $S_i$ according to the order in $P$ and $S_i''=S_i-S_i'$.
 Note that each of $S_i'$ and $S_i''$ induces a clique on $k-1$ vertices in $H_1$.
 Add $k(k-1)$ new vertices, denoted $w_i'$, $w_i''$, $i=1,\ldots,k(k-1)/2$, and add all edges between $w_i'$ and $S_i'$ and between $w_i''$ and $S_i''$ for all $i$'s.
 Call the resulting graph $H_2=H_2(k)$.
 Finally, let $G_k$ be obtained as a union of $H_2$ and a set of $|V(H_2)|$ pairwise vertex-disjoint cliques $K_k$ that each  clique shares  exactly one vertex with $H_2$.
 That is, we ``hang'' $K_k$'s on the vertices of $H_2$ and refer to them as ``hanging cliques''.

 \begin{figure}[tb]
  \centering
  \includegraphics{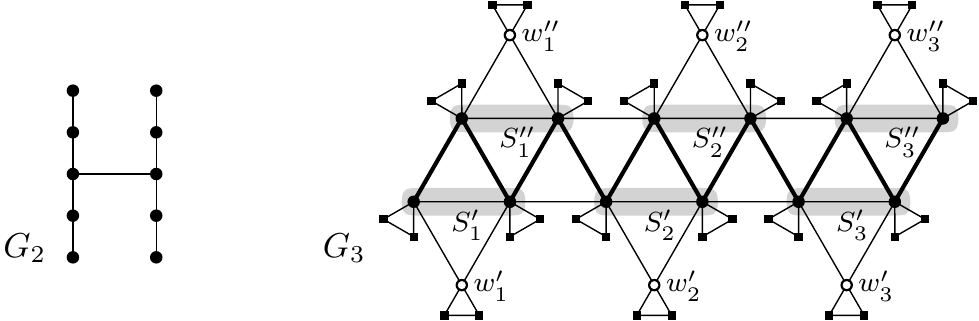}
  \caption{Two graphs $G_k$ of tree-width $k-1$ with $\wisarb(G_k) = 2(k-1)$ and $\isarb(G_k) = 3\binom{k}{2}$.
  Left: $k=2$ and $G_k$ has tree-width~$1$.
  Right: $k=3$ and $G_k$ has tree-width~$2$.
  }
  \label{fig:G2-and-G3}
 \end{figure}
 
 \begin{figure}[tb]
  \centering
  \includegraphics{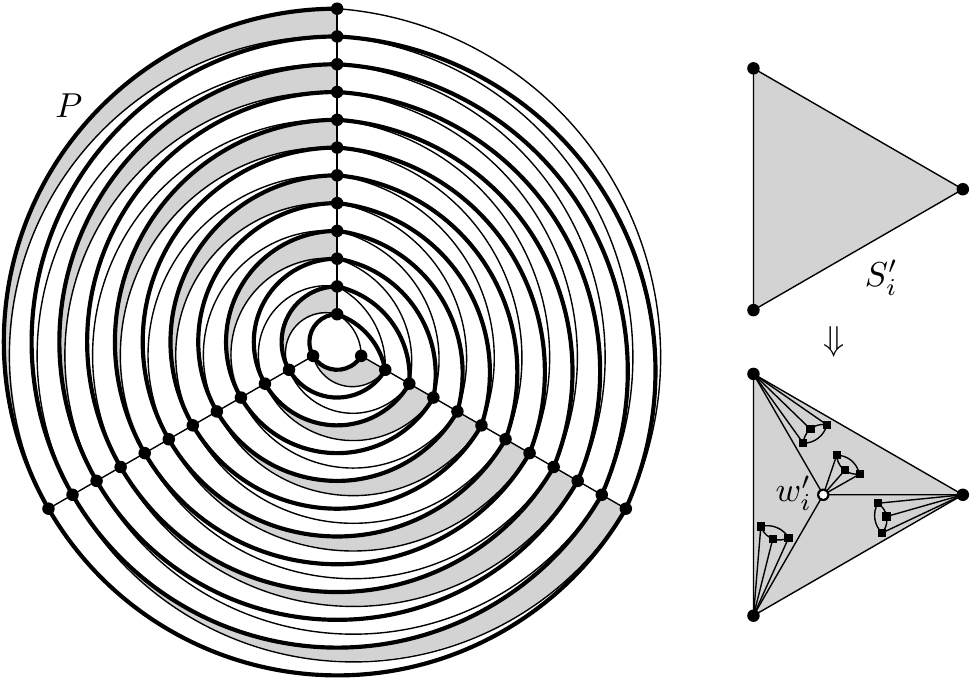}
  \caption{Illustration of the planar graph $G_4$ with $\wisarb(G_4) = 6$ and $\isarb(G_4) = 18$.
   Left: The graph $H_1 = H_1(4)$ is the fourth power of a $36$-vertex path $P$ (bold edges).
   The triangles induced by $S'_i$, $S''_i$ for $i = 1,\ldots,6$ are highlighted in gray and form a facial triangle in $H_1$.
   Right: $G_4$ is obtained from $H_1$ by augmenting each set $S'_i$, $S''_i$ with a vertex $w'_i$ respectively $w''_i$ and four ``hanging $K_4$'s''.
   }
  \label{fig:G4-round}
 \end{figure}

 See the right part of the Figure~\ref{fig:G2-and-G3} for an illustrative example when $k=3$ and Figure~\ref{fig:G4-round} for the case $k=4$.

 \begin{lemma}\label{lem:Gk}
  For any $k\geq 2$, the tree-width of $G_k$ is~$k-1$,
  $\isarb(G_{k}) \geq 3\binom{k}{2}$, and
  $\wisarb(G_{k}) \geq 2(k-1)$.
  Moreover $G_3$ is outerplanar and $G_4$ is planar.
 \end{lemma}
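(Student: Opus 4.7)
The plan is to handle the four assertions of Lemma~\ref{lem:Gk} in order of increasing difficulty.

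First, for the tree-width I would exhibit an explicit tree decomposition of width~$k-1$. Start from the natural path decomposition of $H_1 = H_1(k)$ (the $(k-1)$-st power of the path $P$), whose bags are sliding windows of $k$ consecutive vertices of $P$. Each vertex $w_i'$ can be inserted into a bag that already contains $S_i'$ (which exists since $S_i'$ lies within $k$ consecutive positions of $P$); this enlarges the bag to size $k$. The same insertion applies to $w_i''$ with $S_i''$. Finally, for every hanging $K_k$ at a vertex~$v$, attach a new leaf bag consisting of $v$ together with the $k-1$ new vertices of that clique. Every bag has size at most $k$, so $G_k$ has tree-width at most $k-1$. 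The matching lower bound is immediate because any hanging clique is a $K_k$ subgraph of $G_k$. For the planarity claims, $H_1(3)$ is the square of a path, which is outerplanar via its standard triangle-zigzag drawing; placing each $w_i'$, $w_i''$ in an adjacent outer-face triangle and hanging each triangle at an outer vertex keeps $G_3$ outerplanar. For $k=4$, $H_1(4)$ is the cube of a path, which embeds planarly as a strip of face-sharing tetrahedra; the augmenting vertices $w_i'$, $w_i''$ can be placed in triangular inner faces while the hanging $K_4$'s attach to outer-boundary vertices, as in Figure~\ref{fig:G4-round}.

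For $\isarb(G_k) \ge 3\binom{k}{2}$, the central combinatorial observation is that every induced star-forest of $G_k$ meets any $K_k$ in at most one edge, since any two edges of $K_k$ span three pairwise adjacent vertices and hence induce a triangle rather than a forest. In particular, each hanging $K_k$ already forces $\binom{k}{2}$ distinct colors in any induced star-forest cover. To boost this to $3\binom{k}{2}$, I would classify, for a cover $\cF_1,\ldots,\cF_t$ and each hanging clique $K_v$, the role played by each color $c$ at $K_v$: either $c$ sits on a single internal edge $u_iu_j$ (so the component of $\cF_c$ inside $K_v$ is a single edge) or $c$ sits on an external edge $vu_i$, in which case the component of $\cF_c$ is a star centered at $v$ whose leaves -- $u_i$ together with any further neighbors of $v$ in $H_2$ -- form an independent set in $G_k$. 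The rigid structure of $H_1$, combined with the cliques $S_i'\cup\{w_i'\}$ and $S_i''\cup\{w_i''\}$, should restrict how many such "apex roles" a single color can play. A careful counting argument across the $|V(H_2)|$ hanging cliques (using that $P$ has $k(k-1)^2$ vertices, which is exactly what is needed to accommodate the analysis) should then yield $t \ge 3\binom{k}{2}$.

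For $\wisarb(G_k) \ge 2(k-1)$, the analogous observation is that any weak induced star-forest meets any $K_k$ in a matching, so each hanging $K_k$ alone demands $\chi'(K_k) \ge k-1$ distinct colors. The factor $2$ comes from the structural fact that every vertex $v\in S_i'$ lies in two distinct $K_k$'s meeting only at $v$, namely its hanging clique $K_v$ and the clique $S_i'\cup\{w_i'\}$; the star at $v$ in any weak induced star-forest contains at most one leaf from each of these two cliques. Tracking how much the $k-1$ colors used at $v$ in $K_v$ can coincide with the $k-1$ colors used at $v$ in $S_i'\cup\{w_i'\}$, and combining the resulting local constraints across the $k-1$ vertices of $S_i'$ and symmetrically across $S_i''$, should force $2(k-1)$ distinct colors overall. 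The most delicate step is the factor~$3$ in the $\isarb$ bound: a single hanging clique only yields $\binom{k}{2}$, and several hanging cliques can a priori be served by the same color classes via components centered at distinct apices. The entire design of $H_2$ -- choosing $P$ of length $k(k-1)^2$, splitting each $S_i$ into $S_i'$ and $S_i''$ with the "$(k-1)$-st position removed", and inserting the vertices $w_i'$, $w_i''$ -- is engineered precisely to rigidify the graph enough to kill this freedom, and translating that rigidity into an explicit counting lower bound is the core of the argument.
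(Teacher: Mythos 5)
Your tree-width argument is close but not correct as written: in the standard path decomposition of $H_1(k)$ the sliding-window bags already have size~$k$, so ``inserting $w_i'$ into a bag that already contains $S_i'$'' produces a bag of size $k+1$, i.e., width~$k$. The fix is to attach a \emph{new} leaf bag $S_i'\cup\{w_i'\}$ of size~$k$ to a window containing $S_i'$ rather than enlarging an existing bag. (The paper sidesteps this by observing directly that $G_k$ is chordal with clique number $k$.) Also, two disjoint edges of $K_k$ span four vertices inducing $K_4$, not ``three pairwise adjacent vertices''; your conclusion that an induced star-forest meets $K_k$ in at most one edge is still right, just for a slightly different reason.

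The genuine gap is in the two lower bounds, which are the substance of the lemma. Your starting observations are correct (a hanging $K_k$ forces $\binom{k}{2}$ colors for $\isarb$ and $k-1$ for $\wisarb$), but you do not supply the boosting step, and the route you gesture at is not the one that closes the gap. For $\wisarb$, looking only at the two $k$-cliques through a fixed vertex $v$ does not yield $2(k-1)$: the colors of the hanging-clique edges at $v$ and of the $S_i'\cup\{w_i'\}$-edges at $v$ can coincide on any color class in which $v$ is a star \emph{center}, and nothing you propose bounds that overlap. The missing idea is the paper's orientation-plus-averaging argument: orient each edge from the leaf to the center of its star, observe $|E(H_1)|=k(k-1)^3-\binom{k}{2}$, and average the out-degree (taken inside $H_1$) over the $k(k-1)$ sets $S_i',S_i''$ --- this is exactly why $P$ has $k(k-1)^2$ vertices --- to locate a vertex $v\in S_i'$ with out-degree at least $k-1$ in $H_1$. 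Such a $v$ is a leaf of $k-1$ distinct stars with centers in $H_1$, hence cannot be the center in those color classes, which makes the $k-1$ hanging-edge colors at $v$ disjoint from them and gives $2(k-1)$. For $\isarb$ the same averaging is run at the level of a whole set $S_i'$: one gets $(k-1)^2$ stars with a leaf in $S_i'$ and center in $H_1$, subtracts the at most $\binom{k-1}{2}$ with center \emph{inside} $S_i'$ to get $\binom{k}{2}$ stars with center outside $S_i'$ (pairwise in distinct forests since $S_i'$ is a clique and the forests are induced), and then adds the $k(k-1)=2\binom{k}{2}$ hanging-clique colors at $S=S_i'\cup\{w_i'\}$, checking the two families are disjoint because a $P_3$ from a star in the first family cannot be extended by a hanging edge inside an induced star-forest. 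Your proposed ``apex-role classification'' is a different and less developed idea, and you acknowledge you cannot complete it; as it stands the two main inequalities are unproved.
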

 \begin{proof}
 First observe that $G_2$ has tree-width~$1$ and satisfies $\chiacyc(G_2)=2$, $\isarb(G_2) = 3$, and $\wisarb(G_2) = 2$.
 
 Consider $k\geq 3$ and let $G=G_k$.
 As $H_1 = H_1(k)$ is a chordal graph of clique number~$k$, the tree-width of $H_1$ is~$k-1$.
 Moreover, $G$ is the union of $H_1$ and a family of edge-disjoint copies of $K_k$, each of which is chordal, has tree-width~$k-1$, and shares at most~$k-1$ vertices with $H_1$.
 It follows that also $G$ is chordal and has clique number~$k$.
 Therefore the tree-width of $G$ is~$k-1$.  
 
 \medskip
 
 Next we show that $\wisarb(G) \geq 2(k-1)$.
 To this end, consider a smallest set $F$ of forests of induced stars (not necessarily induced forests) partitioning the edge set of $G$.
 Without loss of generality we assume that each edge is contained in exactly one forest in $F$.
 If a star has only two vertices, we arbitrarily pick one vertex to be the center and the other to be the only leaf.
 We orient each edge $e$ in $G$ from the leaf to the center of the star containing $e$.
 Thus, $\wisarb(G)$ is at least the out-degree of any vertex. 
 The number of edges in the subgraph $H_1$ of $G$ is $k(k-1)^3 - \binom{k}{2}$.
 Each of these edges contributes~$1$ to the out-degree of some vertex in $H_1$.
 Thus, there is a set $S_i'$ or $S_i''$ (assume without loss of generality that it is $S_i'$) of total  out-degree at least $\lceil (k(k-1)^3 - \binom{k}{2})/(k(k-1))\rceil = (k-1)^2$ in $H_1$.
 Then there is a vertex $v\in S_i'$ with out-degree at least $(k-1)^2/|S_i'|=k-1$ in $H_1$.
 Thus there is a set $F'\subseteq F$ of at least~$k-1$ weak induced star-forests such that in each of the forests in $F'$ the vertex $v$ is a leaf of some star whose center is in $H_1$.
 
 On the other hand, consider the $k-1$ edges of $G$ incident to $v$ that are in the ``hanging clique'', that is, that are not in $H_2$.  
 These edges are in distinct forests from $F$ and neither of them is in $F'$, since such a forest would contain a $P_3$ whose endpoint is a center of a star.
 Thus $v$ is in at least $2(k-1)$ forests.
 This proves that $\wisarb(G) \geq 2(k-1)$.
 
 \medskip
 
 To prove that $\isarb(G) \geq 3\binom{k}{2}$ consider any smallest set $F$ of induced star-forests covering the edge set of $G$.
 Without loss of generality we assume that each edge is contained in exactly one forest in $F$.
 Again, we orient each edge $e$ in $G$ from the leaf to the center of the star containing $e$.
 As argued above, without loss of generality, there is a set $S_i'$ of total out-degree at least $(k-1)^2$ in $H_1$.
 By the choice of $S_i'$, at least $(k-1)^2$ stars in $F$ have a leaf in $S_i'$ and the center in $V(H_1)$.
 At most $\binom{k-1}{2}$ such stars have their center in $S_i'$ because there are $\binom{k-1}{2}$ edges in $S_i'$.
 Thus there is a set $X$ of at least $(k-1)^2 - \binom{k-1}{2} = \binom{k}{2}$ stars in $F$ that have a leaf in $S_i'$ and the center in $V(H_1) - S_i'$.
 Since $S_i'$ induces a clique, the stars from $X$ belong to distinct forests of $F$.
 
 Let $S= S_i'\cup \{w_i'\}$.
 Each vertex in $S$ is incident to $k-1$ edges of a ``hanging clique'' which belong to distinct forests of $F$ because any two such edges either induce a triangle in the respective ``hanging clique'' or have an edge of $G[S]$ between their endpoints.
 Call the set of these $k(k-1)$ forests $Y\subseteq F$.
 We see that $X$ and $Y$ are disjoint. 
 Thus $\isarb(G_t) \geq |X| + |Y| = \binom{k}{2} + 2\binom{k}{2} = 3\binom{k}{2}$.

 \medskip

 The facts that $G_3$ is outerplanar and $G_4$ is planar follow from the embeddings given in Figure~\ref{fig:G2-and-G3} and Figure~\ref{fig:G4-round}, respectively.
\end{proof}

\section{Proof of Theorem~\ref{thm:induced-for-classes}}\label{sec:minorProof}

First suppose that $\chi(\cF\nabla\half)\neq\infty$.
We shall prove that $\iarb(\cF)\neq\infty$.
We have $\chi(\cF)\neq\infty$ and $\chi(SD(G))\neq\infty$ since $\cF$, $SD(\cF)\subseteq \cF\nabla\half$.
Thus $\chiacyc(\cF)\neq\infty$ due to Dvo\v{r}\'{a}k~\cite{Dvo-08}.
Hence $\iarb(\cF)\neq\infty$ as $\iarb(G)\leq\chiacyc(G)^2$ for any graph $G$ by inequality~\eqref{eq:iaChiAcyc}.

\medskip

Now suppose that $\iarb(\cF)\neq\infty$.
Then $\chi(\cF\nabla\half)\neq\infty$ holds due to the following claim.

\begin{claim}\label{claim:nabla-half-at-most-iarb}
 For every graph $G$ we have $\chi(G \nabla \half) \leq \iarb(G)2^{3\iarb(G)+1}$.
\end{claim}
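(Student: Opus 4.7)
The plan is to produce, for any $H\in G\nabla\half$, an explicit proper vertex coloring using at most $k\cdot 2^{3k+1}$ colors, where $k=\iarb(G)$. Start by fixing induced forests $F_1,\dots,F_k$ covering $E(G)$ (these exist by the definition of $\iarb$); each $F_\ell$ is bipartite, so fix a proper $2$-coloring $\phi_\ell\colon V(G)\to\{0,1\}$ and define the signature $\sigma(v)=(\phi_1(v),\dots,\phi_k(v))\in\{0,1\}^k$. The key property is that for every edge $xy\in E(G)$ lying in $F_\ell$, the tuples $\sigma(x)$ and $\sigma(y)$ differ in coordinate $\ell$.

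Given $H$ with stars $S_1,\dots,S_n$, centers $c_i$, and leaf sets $L_i$, I would color each $v_i\in V(H)$ by
\[
 f(v_i) \;=\; \bigl(\ell_i,\;\sigma(c_i),\;\alpha_i,\;\beta_i,\;\epsilon_i\bigr) \;\in\; [k]\times\{0,1\}^{3k+1},
\]
giving at most $k\cdot 2^{3k+1}$ colors. Here $\sigma(c_i)$ is the center's signature, $\ell_i$ is a canonical forest index drawn from the edges of $S_i$ at $c_i$ (with a default value when $L_i=\emptyset$), $\alpha_i$ and $\beta_i$ are the signatures of a canonical leaf of $S_i$ and of one further distinguished vertex (for instance a canonical $F_{\ell_i}$-neighbor of that leaf), and $\epsilon_i\in\{0,1\}$ is a role/parity bit. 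Properness is then verified by considering an arbitrary $v_iv_j\in E(H)$ with witness edge $e\in F_\ell$ joining $c_i$ to some $w\in V(S_j)$. The easy case is $w=c_j$: the second components already differ because $\sigma(c_i)_\ell\neq\sigma(c_j)_\ell$. The substantive case is $w\in L_j$ (or its symmetric analogue), where $\sigma(c_i)$ and $\sigma(c_j)$ can coincide; here the constraint $\phi_\ell(w)\neq\phi_\ell(c_i)$, together with $\phi_m(w)\neq\phi_m(c_j)$ where $c_jw\in F_m$, and the fact that $F_\ell$ is induced (so $c_i,w,c_j\in F_\ell$ would force $c_ic_j\notin E(G)$), should prevent the auxiliary data $(\ell_i,\alpha_i,\beta_i,\epsilon_i)$ and $(\ell_j,\alpha_j,\beta_j,\epsilon_j)$ from matching.

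The main obstacle is designing the canonical choices of $\ell_i$, the canonical leaf, and the further distinguished vertex so that they depend only on $v_i$ and $S_i$, yet are still strong enough to discriminate any adjacent pair in the center-leaf witness case. The factor $k$ in front corresponds to recording one canonical forest per vertex, and the exponent $3k+1$ corresponds to storing three length-$k$ signatures together with the role bit. Once the canonical choices are fixed, verifying properness should reduce to a short case analysis depending on whether the witness forest $\ell$ coincides with the forest $F_m$ carrying the star edge $c_jw$, exploiting the induced-forest hypothesis to eliminate the otherwise-problematic configuration where $\ell=m$ and $\sigma(c_i)=\sigma(c_j)$.
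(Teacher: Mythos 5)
Your plan diverges from the paper's proof: the paper passes from the $\iarb(G)$ induced forests to at most $3\iarb(G)$ induced \emph{star}-forests (via $\isarb(G)\leq 3\iarb(G)$) and colors $v_i$ by the pair $(\varphi(c_i),A_i)$, where $\varphi$ is a proper $2\iarb(G)$-coloring of $G$ and $A_i\subseteq[3\iarb(G)]$ records which star-forests contain an edge of $S_i$. The discrimination in the center-to-leaf case then rests on a single decisive structural fact: an induced star-forest cannot contain a path on four vertices, so the star-forest $F_a$ carrying the star edge $wc_j$ cannot also meet $S_i$, forcing $A_i\neq A_j$. You instead stay with general induced forests, encode bipartition classes via signatures $\sigma$, and try to make up the difference with auxiliary ``canonical'' data $(\ell_i,\alpha_i,\beta_i,\epsilon_i)$.

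The problem is that the substantive part of your proposal is left unfinished, and it is far from clear it can be finished as stated. You acknowledge yourself that the ``main obstacle is designing the canonical choices'' and never resolve it; the canonical forest index, the canonical leaf, and the ``canonical $F_{\ell_i}$-neighbor of that leaf'' are never pinned down (and are not even well-defined when $S_i$ is a single vertex), so the claimed properness argument in the center-to-leaf case is only asserted. Moreover, the underlying mechanism looks too weak. In the configuration where $c_iw\in F_\ell$ and $c_jw\in F_\ell$ with $w$ a leaf of $S_j$, one gets $\phi_\ell(c_i)\neq\phi_\ell(w)\neq\phi_\ell(c_j)$, hence $\phi_\ell(c_i)=\phi_\ell(c_j)$: the signatures provide no separation in precisely the problematic situation, and for a general induced forest there is no contradiction in $F_\ell$ containing the path $c_i,w,c_j$ together with further edges into $S_i$. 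That is exactly what the paper's move to induced \emph{star}-forests rules out, and your sketch has no substitute for it. As it stands this is a genuine gap, not merely missing detail.
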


 Let $\varphi$ be a proper vertex coloring of $G$ with at most $2\iarb(G)$ colors.
 This exists as $\arb(G) \leq \iarb(G)$ and hence $G$ is $(2\iarb(G)-1)$-degenerate.
 Moreover, let $F_1,\ldots,F_k$, $k \leq 3\iarb(G)$ be induced star-forests covering $E(G)$, which exist as $\isarb(G) \leq 3\iarb(G)$.
 
 Now consider $H$ with vertex set $v_1,\ldots,v_n$ to be an arbitrary but fixed $\half$-shallow minor of $G$.
 Let $H$ be defined by vertex-disjoint stars $S_1,\ldots,S_n$ in $G$ with centers $c_1,\ldots,c_n$, respectively.
 Let $E(H) = E_1 \dot\cup E_2$, where $E_1$ contains all edges $v_iv_j$ for which $c_ic_j \in E(G)$ and $E_2 = E(H) - E_1$ contains all remaining edges $v_iv_j$ for which $c_i$ is adjacent to a leaf in $S_j$, or $c_j$ is adjacent to a leaf in $S_i$.

 We define a vertex coloring $\psi$ of $H$ as follows:
 For $i=1,\ldots,n$ let $A_i = \{j \in [k] \colon E(F_j) \cap E(S_i) \neq \emptyset\}$ be the indices of those star-forests that contain at least one edge in $S_i$.
 For each vertex $v_i$ define the color of $v_i$ to be $\psi(v_i) = (\varphi(c_i),A_i)$.
 Clearly, $\psi$ uses at most $2\iarb(G)2^k \leq \iarb(G)2^{3\iarb(G)+1}$ colors.
 
 \begin{figure}
  \centering
  \includegraphics{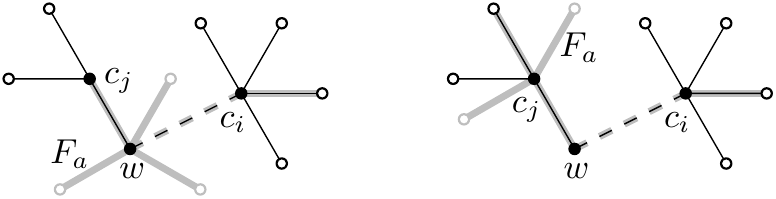}
  \caption{Two stars $S_i$ and $S_j$ (black edges) with centers $c_i$ and $c_j$, an edge (dashed) between a leaf $w$ of $S_j$ and the center $c_i$ of $S_i$. If an induced subgraph $F_a$ (bold gray edges) contains $wc_j$ and an edge in $S_i$, then it also contains $wc_i$.}
  \label{fig:f-graph-classes}
 \end{figure}

 If $v_iv_j$ is an edge in $E_1$, then $\varphi(v_i) \neq \varphi(v_j)$ and hence $\psi(v_i) \neq \psi(v_j)$.
 Assume that  $v_iv_j$ is an edge in $E_2$, say center $c_i$ is adjacent to a leaf $w$ in $S_j$, consider the index $a \in A_j$ such that edge $wc_j$ is in induced star-forest $F_a$.

 Then $c_i \notin V(F_a)$ since otherwise all the edges $wc_j$, $wc_i$ and some edge from $S_i$ are induced by $V(F_a)$ in $G$ and contradict that $F_a$ is an induced star-forest.
 See Figure~\ref{fig:f-graph-classes} for an illustration.
 Therefore $S_j \neq S_i$ and thus $\psi(v_i) \neq \psi(v_j)$.
 So $\psi$ is indeed a proper vertex coloring of the $\half$-shallow minor $H$ of $G$, proving that $\chi(G \nabla \half) \leq \iarb(G)2^{3\iarb(G)+1}$, which concludes the proof of Claim~\ref{claim:nabla-half-at-most-iarb}.
 
 \medskip

 Next we prove that $\wiarb(\cF)\neq\infty$ if and only if $\arb(\cF)\neq\infty$.
 For each graph $G$ we have $\arb(G)\leq\wiarb(G)$ by inequality~\eqref{eq:basic:inequalities1}.
 Thus $\wiarb(\cF)\neq\infty$ implies $\arb(\cF)\neq\infty$.
 So assume that $\arb(\cF)\neq\infty$.
 Then $\wiarb(\cF)\neq\infty$ holds due to the follow claim.
 
 \begin{claim}\label{claim:wia-by-a}
  For every graph $G$ we have $\wiarb(G)\leq4\arb(G)^2$.
 \end{claim}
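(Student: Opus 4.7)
Set $a = \arb(G)$. My plan is to establish the stronger inequality $\wisarb(G) \leq 4a^2$, which implies the claim by~\eqref{eq:basic:inequalities2}. The argument proceeds in two stages: first, decompose $E(G)$ into $2a$ star-forests via Proposition~\ref{prop:basic-inequalities}; then, refine each such star-forest into at most $2a$ weak induced star-forests by a proper vertex coloring of each star's leaves.

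For the refinement stage, I would fix one star-forest $S$ from the first decomposition. Its components are pairwise vertex-disjoint stars. For every center $c$ of $S$ with leaf set $L_c \subseteq V(G)$, the subgraph $G[L_c]$ satisfies $\arb(G[L_c]) \leq a$, so by Burr's relation between arboricity and degeneracy (already invoked in the excerpt) it is $(2a-1)$-degenerate and therefore admits a proper vertex coloring $\varphi_c \colon L_c \to \{1,\ldots,2a\}$. I would then partition $E(S)$ into classes $E_1, \ldots, E_{2a}$, where
\[
 E_j = \{\, c\ell : c \text{ a center of } S,\ \ell \in L_c,\ \varphi_c(\ell) = j\,\}.
\]
Because the centers of $S$ are pairwise vertex-disjoint, each $E_j$ is a star-forest, and because $\varphi_c$ is a proper coloring of $G[L_c]$, the leaves belonging to any single component of $E_j$ form an independent set in $G$. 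Thus each component of $E_j$ is an induced star of $G$, so $E_j$ is a weak induced star-forest.

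Combining the two stages, $E(G)$ is covered by at most $2a \cdot 2a = 4a^2$ weak induced star-forests, yielding $\wiarb(G) \leq \wisarb(G) \leq 4\arb(G)^2$. The only step requiring care is verifying that each component of $E_j$ is \emph{induced} in $G$ rather than merely a star as a subgraph; this is precisely what properness of $\varphi_c$ on $G[L_c]$ guarantees. Beyond the star-arboricity bound $\sarb(G) \leq 2\arb(G)$ and the degeneracy-coloring passage, no additional ingredients are needed.
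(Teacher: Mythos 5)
Your proof is correct and follows essentially the same approach as the paper: decompose $E(G)$ into $\sarb(G) \leq 2\arb(G)$ star-forests, then split each star-forest into at most $2\arb(G)$ weak induced star-forests according to the colors of the leaves. The only (inessential) difference is that you use a separate proper coloring $\varphi_c$ of each induced leaf set $G[L_c]$, whereas the paper applies a single global proper coloring of $V(G)$ to all star-forests at once; both yield the same $2\arb(G)$ bound via degeneracy and hence the same final count $4\arb(G)^2$.
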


  Let $\cF=\{S_1,\ldots,S_{\sarb(G)}\}$ be a smallest set of star-forests covering $E(G)$ and let $c$ be an optimal proper coloring of $V(G)$.
 For all $i\in[\sarb(G)]$ and $j\in[\chi(G)]$ let $F_{i,j}$ denote the subgraph of $S_i$ formed by all edges whose leaf vertex is colored $j$ under $c$.
 Then $F_{i,j}$ is a weak induced star-forest.
 This shows that $\wiarb(G)\leq \sarb(G)\chi(G)\leq 2\arb(G)2\arb(G)\leq 4\arb(G)^2$.
 Here $\sarb(G)\leq 2\arb(G)$ by Proposition~\ref{prop:basic-inequalities} and $\chi(G)\leq 2\arb(G)$ as each graph $G$ is $(2\arb(G)-1)$-degenerate~\cite{Burr86}.
 This concludes the proof of Claim~\ref{claim:wia-by-a} and the proof of Theorem~\ref{thm:induced-for-classes}.

\section{Proof of Theorem~\ref{thm:degeneracy}}
\label{sec:degeneracy}
Let $d\geq 2$.
The fact that $\arb(\cD_d) \leq d$ follows from Nash-Williams' Theorem.
For the lower bound $\arb(\cD_d) \geq d$ it suffices to observe that the complete bipartite graph $K_{n,d}$, with $n>(d-1)^2$ satisfies $K_{n,d} \in \cD_d$ and $\arb(K_{n,d}) = d$ by Nash-Williams' Theorem.

Some of the authors of this paper constructed  $2$-degenerate graphs of arbitrarily large induced arboricity, see \cite{Axe-18}, showing that $\iarb(\cD_d) = \isarb(\cD_d) = \infty$.
We shall show first that $\wisarb(G) \leq 2d$, for any $d$-degenerate graph $G$.

\medskip

An ordering $v_1<\cdots<v_n$ of the vertices of $G$ is a \emph{$d$-degeneracy ordering} if for each $i\in[n]$ the vertex $v_i$ has at most~$d$ neighbors $v_j$ with $j<i$.
We think of identifying vertices with some points on a horizontal line.
We say that a vertex $v_i$ lies to the right (left) of $v_j$ if $i>j$ ($i<j$).
A star is a \emph{right (left) star} with respect to the vertex ordering if its center lies left of (right of) all its leaves.
We shall prove the following claim by induction on $n = |V(G)|$.

\begin{claim*}
 For any $d$-degenerate graph $G$ and any $d$-degeneracy ordering $v_1<\cdots<v_n$ of $G$, there is a coloring $c$ of $E(G)$ in colors $\{1,\ldots,2d\}$ and sets of colors $S(v_i)\subseteq[2d]$, $i=1,\ldots,n$, such that each color class is a weak induced star-forest of right stars, $|S(v_i)|=d$, and $S(v_i)$ does not contain any color from $c(v_iv_j)$ for $v_iv_j\in E(G)$ and $j<i$, i.e., of the edges going to the left from $v_i$.
\end{claim*}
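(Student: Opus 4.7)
My plan is to prove the claim by induction on $n = |V(G)|$, keeping the $d$-degeneracy ordering $v_1 < \cdots < v_n$ fixed throughout. The base case $n = 1$ is immediate: there are no edges, and $S(v_1)$ can be taken to be any $d$-subset of $[2d]$. For the inductive step I would apply the claim to $G - v_n$ under its inherited $d$-degeneracy ordering, obtaining a coloring $c'$ of $E(G - v_n)$ and sets $S(v_i) \subseteq [2d]$ for $i < n$. Let $v_{j_1}, \ldots, v_{j_r}$ with $j_1 < j_2 < \cdots < j_r$ be the (at most $d$) left neighbors of $v_n$, and extend $c'$ to $c$ by greedily coloring the edges $v_nv_{j_1}, \ldots, v_nv_{j_r}$ in this specific order; finally, set $S(v_n)$ to be any $d$-subset of the at least $2d - r \geq d$ colors that did not appear on these new edges.

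The key technical point is that at each greedy step $s$, a valid color in $[2d]$ always remains. When assigning a color to $v_nv_{j_s}$, I need to avoid three kinds of ``bad'' colors: the $s - 1$ colors already chosen on the previous new edges (so that the final $r$ colors are pairwise distinct, guaranteeing $|S(v_n)| = d$); the $d$ colors outside $S(v_{j_s})$ (so that $v_{j_s}$ currently has no left edge of the chosen color, meaning $v_{j_s}$ is either isolated or the center of a right star in that color, so $v_n$ may be appended as a new leaf); and the colors of the existing edges $v_{j_s}v_{j_t}$ with $t > s$ (so that $v_n$ is non-adjacent to every other current leaf of the star it joins, preserving the induced property). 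Since the right neighbors of $v_{j_s}$ that are adjacent to $v_n$ all lie in $\{v_{j_{s+1}}, \ldots, v_{j_r}\}$, the third category contributes at most $r - s$ colors, so the total count of bad colors is at most $(s-1) + d + (r-s) = d + r - 1 \leq 2d - 1$, which is strictly less than $2d$.

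Verifying the three claimed properties for $c$ and $\{S(v_i)\}_{i \leq n}$ is then routine. The conditions on $S(v_i)$ for $i < n$ survive because no new left edges appear at any $v_i$ with $i < n$; the condition on $S(v_n)$ is built in by construction. Each color class is a weak induced star-forest of right stars: inside $G - v_n$ this is the inductive hypothesis, and the greedy rules (b) and (c) ensure that attaching $v_n$ produces only right stars whose components remain induced. The main obstacle is the combinatorial balance in the third category—an arbitrary ordering of the edges $v_nv_{j_1}, \ldots, v_nv_{j_r}$ would inflate its size to $r - 1$ at every step, pushing the bound up to $3d - 2 > 2d$; processing $v_n$'s left neighbors in increasing order of index is precisely what trims this count down to $r - s$ and makes the greedy argument go through.
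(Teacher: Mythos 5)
Your proof is correct and follows essentially the same approach as the paper: induction on $n$, processing the left-neighbors of $v_n$ in increasing order, and greedily coloring the new edges so that each color lies in $S(v_{j_s})$, avoids previous new colors, and avoids colors of edges from $v_{j_s}$ to later neighbors of $v_n$. Your count of at most $d+r-1 \leq 2d-1$ bad colors is just a restated form of the paper's bound $|S'(w_i)| \geq d-(t-i) \geq i$, and you correctly identify the crucial role of the increasing-order processing.
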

When $|V(G)|=1$, let $S(v_1)=[d]$.
All conditions are clearly satisfied. 
Assume that $n=|V(G)|>1$ and consider a $d$-degeneracy ordering $v_1<\cdots<v_n$ of $G$.
Let $G'=G - v_n$.
Then $v_1<\cdots<v_{n-1} $ is a $d$-degeneracy ordering of $G'$.
Consider a coloring $c$ of $E(G')$ guaranteed by the induction hypothesis and respective color sets $S(v_1),\ldots,S(v_{n-1})$.
Let $w_1=v_{i_1}, \ldots, w_t= v_{i_t}$ be the neighbors of $v_n$ in $G$ with $i_1<\ldots <i_t$, $t\leq d$.
Further let $S'(w_i)= S(w_i)- \{ c(w_iw_j) \colon w_iw_j\in E(G), i<j\}$, i.e., the sets $S'(w_i)$ are obtained from $S(w_i)$ by deleting the colors of the edges going from $w_i$ to the right, to another neighbor of $v_n$.
Then $|S'(w_i)|\geq d-(t-i)\geq i$ for each $i=1,\ldots,t$.
Now we shall extend the coloring $c$ to a coloring of $|E(G)|$, such that $c(v_nw_1)\in S'(w_1)$, $c(v_nw_2)\in S'(w_2)-\{c(v_nw_1)\}$, and so on, $c(v_nw_i) \in S'(w_i)-\{c(v_nw_1),\ldots,c(v_nw_{i-1})\}$, $i=2,\ldots,t$.
This procedure is possible since $|S'(w_i)|\geq i$, $i=1,\ldots,t$.
Let $S(v_n) \subseteq [2d]- \{c(v_nw_1),\ldots,c(v_nw_t)\}$ such that $|S(v_n)|=d$.
We shall prove that this coloring and the color sets $S(v_1),\ldots,S(v_{n})$ satisfy the Claim.

Within $G'$ each color class is a union of right stars by induction.
Moreover all monochromatic (left) stars centered at $v_n$ are single edges $v_nw_i$ whose color is in $S(w_i)$, that is, their color is not assigned to any edge going from $w_i$ to the left.
Hence all color classes are unions of right stars.
The sets $S(v_1),\ldots,S(v_n)$ satisfy the conditions of the Claim.
We only need to verify that each color class is a weak induced star-forest.
Since each color class in $G'$ is a weak induced star-forest, it is sufficient to consider monochromatic stars containing an edge $v_nw_i$ for some $i \in [t]$.
The color of the edge $v_nw_i$ is in $S'(w_i)$.
Thus neither of the neighbors $w_j$ of $v_n$ with $j\neq i$ is a leaf in the monochromatic (right) star containing $v_nw_i$.
Hence this star is induced in $G$.
This proves the Claim.

\medskip
 
 The claim in particular shows that $\wiarb(\cD_d) \leq \wisarb(\cD_d) \leq 2d$.
 Next, we shall show that for every $d \geq 2$ there is a $d$-degenerate bipartite graph $G$ with $\wiarb(G) \geq 2d$.

 \medskip
 
 Let $d \geq 2$ be fixed and let $N = 2^dd^{d+1}$.
 Consider pairwise disjoint sets $A$, $B$, and $B_S$, for each $S\in \binom{B}{d}$, where $|A|=d$ and $|B| = |B_S|=N$, $S\in \binom{B}{d}$.
 Let $G$ be a union of complete bipartite graphs with parts $(A, B)$ and $(S, B_S)$, $S\in \binom{B}{d}$.
 It is easy to see that the resulting graph $G$ is $d$-degenerate and bipartite.
 So assume that $E(G)$ is covered by some number $x<2d$ of weak induced forests.
 Without loss of generality assume that each edge is contained in exactly one of these forests.
 Consider a coloring $c:E(G)\rightarrow \{1, \ldots, x\}$ such that $c(e)=i$ if $e$ is in the $i^{\rm th}$ forest.
 Let $A=\{a_1, \ldots, a_d\}$.
 For each vertex $b \in B$, let $\bar{c}(b) = (c(ba_1), c(ba_2), \ldots, c(ba_d))$.
 We see that there are $|B|= 2^dd^{d+1} = d \cdot (2d)^d \geq d \cdot x^d$ such tuples $\bar{c}(b)$, $b\in B$.
 The total possible number of such distinct tuples is $x^d$.
 Thus, by pigeonhole principle, there is a set $S\subseteq B$ such that for any two elements $b$, $b'\in S$ we have $\bar{c}(b)=\bar{c}(b')$.
 This implies that under coloring $c$ the subgraph $G[(A, S)]$ is a union of monochromatic stars, each with center in $A$ and leaf-set $S$.
 Since each color class is a forest, all these stars have different colors.
 So in total there are at least~$d$ colors.
 By a similar argument, we see that there is a subset $T$ of $B_S$ so that $|T|=d$,   $G[(S, T)]$ is a union of $d$ monochromatic stars with centers in $S$ and leaf-sets $T$.  Note that a star with center $a\in A$ and leaf-set $S$ and a star with center $s\in S$ and leaf-set $T$ together do not induce a forest,
 so they must be of different colors. Thus the total number of colors is at least $2d$, a contradiction.
 Thus $\wiarb(\cD_d) \geq 2d$.
 This concludes the proof of Theorem~\ref{thm:degeneracy}.

\section{Proof of Theorem~\ref{thm:acyclic-chromatic-number}}

First we shall establish the upper bounds.
Recall that $\cT_{k-1}\subseteq A_{k}$.
So let $G\in \cA_k$ be an $n$-vertex graph.
First we consider the arboricity of $G$.
Consider an acyclic coloring of $G$ with $k$ colors and let $a_1, \ldots, a_k$ be the sizes of the color classes.
Then $a_1+\cdots + a_k =n$ and since each edge is induced by some two color classes and each pair of color classes induces a forest, we have that 
\[|E(G)|  \leq \sum_{1\leq i<j\leq k} (a_i+a_j -1) = (k-1)\sum_{1\leq i\leq k} a_i  - \binom{k}{2} = (k-1)n-\binom{k}{2}.\]
Since each subgraph of $G$ on $n'$ vertices has acyclic chromatic number at most $k$, it has at most $(k-1)n'-\binom{k}{2}$ edges.
Thus ${\rm m}(G) \leq k-1$ and by Nash-Williams' Theorem (Theorem~\ref{thm:Nash-Williams}) $\arb(\cT_{k-1})\leq \arb(\cA_k) \leq k-1$ for any $k \geq 2$.

\medskip

The upper bound on the star arboricity follows from a result of Hakimi~\textit{et al.}~\cite{Hak-96} showing that $\sarb(G)\leq \chiacyc(G)$.
Hence $\sarb(\cT_{k-1})  \leq \sarb(\cA_k) \leq k$ for any $k \geq 2$.

\medskip
 
Next we shall show that $\wiarb(G) \leq k-1 + (k \bmod 2)$.
Consider an acyclic coloring of $V(G)$ with $k$ color classes $V_1,\ldots,V_k$.
Consider a complete graph $H$ on vertex set $\{V_1,\ldots,V_k\}$ and a partition of $E(H)$ into $m$ matchings $M_1,\ldots,M_m$, where $m = \chi'(K_k) = k-1 + (k \bmod 2)$.
For $\ell = 1,\ldots,m$ let $G_\ell$ be the spanning subgraph of $G$ consisting of those edges $xy$ with $x \in V_i$, $y \in V_j$, and $V_iV_j \in E(M_\ell)$.
It is easy to see that $G_1,\ldots,G_m$ is a partition of $E(G)$ into weak induced forests, which implies that  $\wiarb(G) \leq k-1 + (k \bmod 2)$.
Thus $\wiarb(\cT_{k-1})  \leq \wiarb(\cA_k)\leq k-1 + (k \bmod 2)$.
 
\medskip
 
We have $\iarb(\cA_k) \leq \binom{k}{2}$ by inequality~\eqref{eq:iaChiAcyc}.
The remaining upper bounds follow from Proposition~\ref{prop:basic-inequalities}, which gives $\isarb(\cT_{k-1})  \leq \isarb(\cA_k) \leq 3\iarb(\cA_k) \leq 3\binom{k}{2}$ and $\wisarb(\cA_k) \leq 2\wiarb(\cA_k) \leq 2k-2+2(k \bmod 2)$.
Moreover $\wisarb(\cT_{k-1})\leq 2k-2$ follows from Theorem~\ref{thm:degeneracy} since each graph in $\cT_{k-1}$ is $(k-1)$-degenerate.

\medskip

To see the lower bounds, note first that the complete graph $K_k$ has tree-width $k-1$, $\iarb(K_k)=\binom{k}{2}$, and $\wiarb(K_k)=\chi'(K_k)=k-1+(k \bmod 2)$.
Moreover Dujmovi\'c and Wood~\cite{Duj-07} show that $\arb(\cA_k)\geq \arb(\cT_{k-1}) \geq k-1$ and $\sarb(\cA_k)\geq \sarb(\cT_{k-1}) \geq k$.
For the remaining lower bounds consider the graph $G_k$ constructed in Section~\ref{subsec:Gk}.
By Lemma~\ref{lem:Gk}, for any $k \geq 2$, we have $G_k\in\cT_{k-1}$, $\wisarb(G_k) \geq 2k-2$, and $\isarb(G_k) \geq 3 \binom{k}{2}$.
This concludes the proof of Theorem~\ref{thm:acyclic-chromatic-number}.

\section{Proof of Theorem~\ref{thm:planar}}

Consider the class $\cP$ of all planar graphs.
Note that $\arb(\cP)=3$ follows from Nash-Williams' Theorem.

 The upper bound $\iarb(\cP) \leq 10$ follows from Borodin's result $\chiacyc(\cP) \leq 5$ in~\cite{Bor-79} and Proposition~\ref{prop:basic-inequalities} as already observed in~\cite{Axe-18}.
 Proposition~\ref{prop:basic-inequalities} gives $\isarb(\cP) \leq 3\iarb(\cP) \leq 30$.
 Similarly, $\chiacyc(\cP) \leq 5$ and Theorem~\ref{thm:acyclic-chromatic-number} immediately give $\wiarb(\cP) \leq 5$.
 Proposition~\ref{prop:basic-inequalities} then gives $\wisarb(\cP) \leq 2 \wiarb(\cP) \leq 10$.
 The lower bounds $\isarb(\cP) \geq \isarb(G_4) \geq 18$ and $\wisarb(\cP) \geq \wisarb(G_4) \geq 6$ follow from Lemma~\ref{lem:Gk} for the planar graph $G_4$.
 
 \medskip

In order to prove the remaining lower bounds $\wiarb(\cP) \geq 4$ and $\iarb(\cP) \geq 8$, we shall use the following preliminary observations.
For an integer $\ell \geq 3$, the double-wheel graph $DW_\ell$ consists of an $\ell$-cycle $C_\ell$, that we refer to as the \emph{rim} of the double-wheel, and two additional vertices $x,y$, called \emph{hubs}, with $N(x) = N(y) = V(C_\ell)$.
Note that $DW_\ell$ is planar for any $\ell \geq 3$.

\begin{lemma}\label{lem:double-wheels}
 Let $\ell$ be odd and consider $k$ induced forests covering $E(DW_\ell)$.
 \begin{enumerate}[label = (\roman*)]
  \item If $\ell \geq 5$, then $k \geq 7$ and some vertex is contained in at least four forests.\label{enum:strong-DW5}
  \item If $\ell \geq 7$ and a hub is contained in at least four forests, then $k \geq 8$.\label{enum:strong-DW7}
 \end{enumerate}
\end{lemma}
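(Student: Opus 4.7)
My plan is to classify each induced forest of the cover by which of the two hubs $x,y$ it contains, translate the coverage requirements into linear constraints on the counts of each type, and close with a double-counting argument for the vertex statement in~(i).

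For an induced forest $F$ of $DW_\ell$, set $S=V(F)\cap V(C_\ell)$ and consider $V(F)\cap\{x,y\}$. If $V(F)\cap\{x,y\}=\emptyset$ (Type~A), then $F$ is an induced subgraph of $C_\ell$, hence a disjoint union of paths with at most $\ell-2$ edges. If $V(F)\cap\{x,y\}=\{x\}$ (Type~B), then $S$ must be independent in $C_\ell$ to avoid a triangle $xv_iv_{i+1}$, so $F$ is a star at $x$ with at most $(\ell-1)/2$ leaves; Type~C ($V(F)\cap\{x,y\}=\{y\}$) is symmetric. If $V(F)\cap\{x,y\}=\{x,y\}$ (Type~D), then $|S|\le 1$, since two rim vertices in $S$ would yield the $4$-cycle $xv_iyv_j$, so $F$ has at most two edges. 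Let $a,b,c,d$ denote the respective counts and set $m=(\ell-1)/2$. Since rim edges lie only in Type~A forests, spokes at $x$ only in Type~B or~D, and spokes at $y$ only in Type~C or~D, the coverage gives
\[
  a(\ell-2)\ge\ell,\qquad bm+d\ge\ell,\qquad cm+d\ge\ell,
\]
with $k=a+b+c+d$; in particular $a\ge 2$.

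For part~(i), a short case check on $d$ shows that the $x$- and $y$-constraints force $b+c+d\ge 5$: $d=0$ gives $b,c\ge 3$; $d\in\{1,2\}$ gives $b,c\ge 2$ for $m\ge 2$; and $d\ge 3$ gives $b+c+d\ge 5$ either trivially (when $d\ge 2m+1$) or because $b,c\ge 1$. Thus $k\ge 7$. For the vertex statement, let $f(v)$ denote the number of forests containing $v$. Removing any isolated vertex from $V(F_i)$ preserves inducedness and edge coverage, so I may assume $|V(F_i)|=|E(F_i)|+\mathrm{comp}(F_i)\ge|E(F_i)|+1$, giving
\[
  \sum_{v}f(v)=\sum_{i=1}^k|V(F_i)|\ge\sum_{i=1}^k|E(F_i)|+k\ge|E(DW_\ell)|+k=3\ell+k.
\]
If every $f(v)\le 3$, the left side is at most $3(\ell+2)=3\ell+6$, forcing $k\le 6$, contradicting $k\ge 7$.

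For part~(ii), assume without loss of generality that $x$ lies in at least four forests, so $b+d\ge 4$, and $\ell\ge 7$ (so $m\ge 3$). Suppose $b+c+d\le 5$: then $c\le 1$, so the $y$-constraint forces $d\ge m+1\ge 4$. A finite check of the remaining quadruples $(b,c,d)$ with $b+d\ge 4$, $b+c+d\le 5$, $d\ge 4$, and both $bm+d\ge 2m+1$ and $cm+d\ge 2m+1$ rules out every case for $m\ge 3$, yielding $b+c+d\ge 6$ and hence $k\ge a+6\ge 8$. The hypothesis $\ell\ge 7$ is sharp: for $\ell=5$ (so $m=2$), the configuration $(a,b,c,d)=(2,1,1,3)$ is a valid cover with $k=7$ and $b+d=4$. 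The main obstacle is this case analysis; the key simplification is that for $m\ge 3$ the constraint $bm+d\ge 2m+1$ with $b\le 1$ forces $d\ge m+1\ge 4$, which rules out the low-$b$ configurations permitted only at $\ell=5$.
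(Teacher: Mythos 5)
Your proof is correct, and its structure is recognizably parallel to the paper's — both classify the induced forests by their relationship to the two hubs $x,y$ and translate coverage into cardinality constraints — but you differ in two substantive ways. First, you partition the forests into four disjoint types (containing neither, only $x$, only $y$, or both hubs) and work with the type counts $a,b,c,d$ as variables subject to the linear constraints $a(\ell-2)\ge\ell$, $bm+d\ge\ell$, $cm+d\ge\ell$. The paper instead works with five overlapping sets $A_x,B_x,A_y,B_y,S$, where $A_v$ (at most one spoke at $v$) and $B_v$ (at least two spokes at $v$) refine your types B, C, D; your constraint $bm+d\ge\ell$ is a slight coarsening of the paper's $|A_x|+m|B_x|\ge\ell$, but it suffices. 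Second, and more interestingly, your argument for the ``some vertex lies in at least four forests'' claim in~(i) is genuinely different: you use a global double count $\sum_v f(v)=\sum_i|V(F_i)|\ge\sum_i|E(F_i)|+k\ge 3\ell+k$ and compare against the trivial upper bound $3|V(DW_\ell)|=3\ell+6$. The paper proves the same claim by a local pigeonhole argument about rim vertices that must appear in at least two forests of $S$ and two forests outside $S$. Your counting argument is shorter and cleaner, and reuses the bound $k\ge 7$ rather than reproving structural facts; the paper's local argument, on the other hand, pins down which vertex has the required property, which is not needed here. For part~(ii) both proofs run a finite case analysis; you case on $c$ (the number of ``only-$y$'' forests) while the paper cases on $|B_y|$, and the bookkeeping is comparable. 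Both proofs are sound.
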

\begin{proof}
  Let $G = DW_\ell$ be covered with a set $T$ of $k$ induced forests.
  Let $x$ and $y$ be the hub vertices of $G$ and $C$ be the cycle of $G$.
  For a hub vertex $v\in \{x, y\}$ in $G$ let $A_v$, respectively $B_v$, be the set of forests $F \in T$ with $v \in V(F)$ that contain at most one edge, respectively at least two edges, incident to $v$.
  Let $S \subseteq T$ be the set of forests that contain at least one edge of $C$.
  Note that among $A_x$, $A_y$, $B_x$, $B_y$, and $S$ only $A_x$ and $A_y$ may have a non-empty intersection.

  \begin{enumerate}[label = (\roman*)]
   \item Let $\ell \geq 5$.    
    Clearly, $|S| \geq 2$.
    Since $\ell$ is odd, each forest $F \in B_v$ contains at most $(\ell-1)/2$ edges incident to $v$.
    This implies that $|A_v| + 2|B_v| \geq 5$.
    Indeed, this clearly holds if $|B_v|\geq 3$.
    Further, if $|B_v| = 0$, then $|A_v| \geq \ell \geq 5$.
    If $|B_v| = 1$, then $|A_v| \geq (\ell+1)/2 \geq 3$.
    Finally, if $|B_v| = 2$, then $|A_v| \geq 1$.
    Assuming without loss of generality that $|B_x| \leq |B_y|$, we conclude that $|A_x \cup B_x \cup A_y \cup B_y| \geq |A_x| + |B_x| + |B_y| \geq 5 - |B_x| + |B_y| \geq 5$.
    It follows that $k = |T| \geq |S| + |A_x \cup B_x \cup A_y \cup B_y| \geq 2+5 = 7$.

    Moreover, note that if a vertex $z \in V(C)$ is contained in only one forest $F \in A_x \cup B_x \cup A_y \cup B_y$, then $F \in A_x \cap A_y$.
    If each of $x,y$ is contained in at most three forests of $T$, then $|A_x|, |A_y| \leq 1$ and $|B_x|, |B_y| \geq 2$ (since $\ell\geq 5$ is odd), and thus at most one vertex in $C$ is contained in only one forest from $A_x \cup B_x \cup A_y \cup B_y$.
    However, at least two vertices on $C$ are contained in two forests of $S$ and it follows that in this case at least one of them is contained in two forests of $S$ and two forests of $T-S$, as desired.
   
   \item Let $\ell \geq 7$ and assume that  a hub vertex $x$ is contained in at least four forests of $T$.
    If $|B_y| = 0$, then $|A_y| \geq \ell \geq 7$ and thus $k= |T| \geq |A_y| + |S| \geq 7+2 = 9$.
    So we may assume that $|B_y| \geq 1$ and symmetrically $|B_x| \geq 1$.
    If $|B_y| = 1$, then $|A_y| \geq (\ell+1)/2 \geq 4$ and thus $k= |T| \geq |B_x| + |B_y| + |A_y| + |S| \geq 1 + 1 + 4 + 2 = 8$.
    And finally if $|B_y| \geq 2$, then $k= |T| \geq |B_y| + |A_x \cup B_x| + |S| \geq 2 + 4 + 2 = 8$, which proves the claim.\qedhere
 \end{enumerate}
\end{proof}

 Finally, it remains to prove $\wiarb(\cP) \geq 4$ and $\iarb(\cP) \geq 8$.
 To this end construct a graph $G$ from one copy $X$ of $DW_5$ on a vertex set $\{v_1,\ldots,v_7\}$ and seven copies $X_1,\ldots,X_7$ of $DW_7$ by identifying for $i =1,\ldots,7$ a vertex $v_i$ in $X$ with a hub vertex of $X_i$, see Figure~\ref{fig:LB-8-planar}.
 \begin{figure}[tb]
 \centering
 \includegraphics[angle = 90]{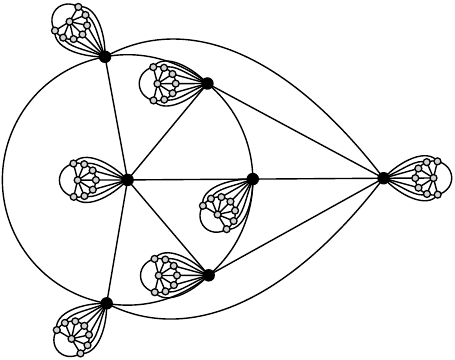}
 \caption{A planar graph $G$ with $\wiarb(G) \geq 4$ and $\iarb(G) = 8$.}
 \label{fig:LB-8-planar}
\end{figure}
 Note that $G$ is planar.
 To see that $\wiarb(G) \geq 4$, consider a set $F$ of weak induced forests covering $E(G)$.
 The components of these forests are induced trees which together cover $E(G)$.
 So Lemma~\ref{lem:double-wheels}~\ref{enum:strong-DW5} implies that some vertex is contained in at least four different components.
 As these components belong to different forests from $F$ we have $|F|\geq 4$.
 Next assume that $E(G)$ is covered with a set $T$ of induced forests.
 By Lemma~\ref{lem:double-wheels}~\ref{enum:strong-DW5}, some vertex $v_i$ of $X$ is contained in at least four forests of $T$.
 By Lemma~\ref{lem:double-wheels}~\ref{enum:strong-DW7}, the corresponding copy $X_i$ of $DW_7$ contains at least eight forests of $T$, proving that $|T| \geq 8$, as desired.

\medskip

Now we shall consider outerplanar graphs.
Since outerplanar graphs have tree-width at most~$2$, $\iarb(\cO)\leq 3$, $\wiarb(\cO)\leq 3$,  $\isarb(\cO)\leq 9$, and $\wisarb(\cO)\leq 4$,  as follows  from Theorem~\ref{thm:acyclic-chromatic-number}.
On the other hand $\iarb(K_3)=3$ and $\wiarb(K_3)=3$, thus $\iarb(\cO)=\wiarb(\cO)=3$. 
Lemma~\ref{lem:Gk} implies that $\isarb(G_3) \geq 9$ and $\wisarb(G_3)\geq 6$, thus, since $G_3$ is outerplanar, 
$\isarb(\cO)=9$ and $\wisarb(\cO)=6$.
This concludes the proof of Theorem~\ref{thm:planar}.

\bibliographystyle{abbrv}
\bibliography{lit}
\end{document}